\theoremstyle{plain}
\newtheorem{theorem}{Theorem}[section]
\newtheorem{lemma}[theorem]{Lemma}
\newtheorem{remark}[theorem]{Remark}
\numberwithin{equation}{section}
\newcommand{\R}{\mathbb R}
\numberwithin{figure}{section}
\numberwithin{table}{section}
\title[A FSG method for fZK equation]{A numerical method for the fractional Zakharov-Kuznetsov equation}
\author{Mukul Dwivedi and Andreas Rupp}
\address{Department of Mathematics, Saarland University, Saarbr\"ucken, Germany}
\email{\{mukul.dwivedi;andreas.rupp\}@uni-saarland.de}
\subjclass[2020]{ 65M20, 65M60, 35G25}
\keywords{Zakharov-Kuznetsov equation, Fourier spectral Galerkin method, Fractional Laplacian, Spectral convergence}
\begin{document}

\begin{abstract}
This paper develops a fully discrete Fourier spectral Galerkin (FSG) method for
the fractional Zakharov--Kuznetsov (fZK) equation posed on a two-dimensional
periodic domain. The equation generalizes the classical ZK model by replacing
the Laplacian with a fractional Laplacian of order \(\alpha\in(0,2]\), thereby
covering the classical ZK equation \(\alpha=2\), the higher-dimensional
Benjamin--Ono--ZK equation \(\alpha=1\), and weaker fractional-dispersion
regimes \(0<\alpha<1\). We first propose a semi-discrete FSG scheme in space
that preserves the discrete analogues of mass, momentum, and Hamiltonian
energy. Using periodic Kato--Ponce product and commutator estimates, we prove
local-in-time uniform Sobolev bounds and strong convergence of the
semi-discrete approximations to the unique strong solution in
\(C([0,\bar T];L^2_{\mathrm{per}}(\Omega))\), for the initial 
condition in \(H^s_{\mathrm{per}}(\Omega)\), \(s\geq 2+\alpha\), and, as by product, we show that the existence and uniqueness of fZK equation in \(L^\infty(0,\bar T;H^s_{\mathrm{per}}(\Omega))
    \cap W^{1,\infty}(0,\bar T;L^2_{\mathrm{per}}(\Omega))\). We then introduce a modified projection adapted to the fractional transport-dispersive operator and prove optimal spatial error
estimates of order \(\mathcal O(N^{-r})\) for \(r>2+\alpha\), together with
exponential convergence for analytic solutions. An integrating-factor fourth-order four-stage Runge--Kutta time discretization is used to integrate the stiff fractional dispersive part exactly, and a fourth-order temporal error estimate is obtained under a high-regularity nonlinear stability assumption. Numerical experiments illustrate the accuracy, fractional-order dependence, and fully discrete conservation drift of the
method.
\end{abstract}

\maketitle

\section{Introduction}

The study of nonlinear dispersive equations has been a cornerstone of mathematical physics since the 19th century, with the Korteweg-de Vries (KdV) equation \cite{KortewegDeVries1895} serving as a fundamental model for unidirectional wave propagation in shallow water channels. The KdV equation exhibits a remarkable balance between nonlinear steepening and dispersive spreading, leading to the existence of stable solitary wave solutions \cite{Zabusky1965}. Its mathematical theory was firmly established through the pioneering work of \cite{BonaSmith1975} on the initial value problem and the discovery of its complete integrability in \cite{Gardner1967}.

While the KdV equation successfully describes one-dimensional wave phenomena, many physical systems require multidimensional models to capture transverse effects. The Zakharov-Kuznetsov (ZK) equation, originally derived in \cite{ZakharovKuznetsov1974} to describe ion-acoustic waves in magnetized plasma, provides a multidimensional generalization of the $1+1$ dimensional KdV equation:
\begin{equation}\label{zkeqn}
    \partial_t u + \partial_{x_1}\Delta u + u\partial_{x_1} u = 0,
\end{equation}
where $u = u(\mathbf{x},t)$, $\mathbf{x} = (x_1, \dots, x_n) \in \mathbb{R}^n$, $n \geq 2$. The physical relevance of the ZK equation extends beyond plasma physics to include applications in solid state physics \cite{kivshar1989dynamics} and fluid dynamics \cite{melkonian1989two}. A rigorous derivation from the Euler--Poisson system was provided in \cite{LannesLinaresSaut2013}, cementing its importance as a canonical multidimensional dispersive equation.

To incorporate nonlocal dispersion and to model sub-dispersive phenomena, one generalizes the classical ZK equation by replacing the Laplacian $\Delta$ with the fractional Laplacian $-(-\Delta)^{\frac{\alpha}{2}}$ of order $\alpha\in(0,2]$. The resulting fractional Zakharov--Kuznetsov (fZK) equation reads:
\[
    \partial_t u-\partial_{x_1}(-\Delta)^{\frac{\alpha}{2}}u
    +u\partial_{x_1}u=0.
\]
The full mathematical family \(0<\alpha\leq2\) has been studied from the point
of view of local smoothing and propagation of regularity; in particular,
Mendez \cite{Mendez2023kato} established a Kato-type smoothing effect of order
\(\alpha/2\) for the whole-space fZK equation. The case \(\alpha=2\) is the
classical ZK equation, while \(\alpha=1\) corresponds to a
higher-dimensional Benjamin--Ono--ZK type model. The range \(0<\alpha<1\)
represents a weaker fractional-dispersion regime in which the transport
nonlinearity is comparatively stronger than the dispersive smoothing.

In this work, we consider the initial value problem for the fZK equation posed on the two-dimensional torus, i.e., $\Omega=[-\pi,\pi]^2$ with periodic boundary conditions:
\begin{equation}\label{eqn:fZK}
    \begin{cases}
        \partial_t u - \partial_{x_1}(-\Delta)^{\alpha/2} u + u\partial_{x_1} u = 0, & \mathbf{x} = (x_1,x_2) \in \Omega, \;\; t \in (0,T], \\
        u(\mathbf{x},0) = u_0(\mathbf{x}), & \mathbf{x} \in \Omega,
    \end{cases}
\end{equation}
where $T > 0$ is a given final time and \(\alpha\in(0,2]\). The fractional Laplacian $(-\Delta)^{\alpha/2}$ is defined spectrally for periodic functions through the Fourier series expansion:
\begin{equation}
    (-\Delta)^{\alpha/2} f(\mathbf{x}) = \sum_{\mathbf{k} \in \mathbb{Z}^2} |\mathbf{k}|^{\alpha} \hat{f}(\mathbf{k}) e^{i\mathbf{k}\cdot\mathbf{x}},
\end{equation}
where $\hat{f}(\mathbf{k}) = (2\pi)^{-2} \int_{\Omega} f(\mathbf{x}) e^{-i\mathbf{k}\cdot\mathbf{x}}  d\mathbf{x}$ are the Fourier coefficients of $f$. The fZK equation \eqref{eqn:fZK} possesses several fundamental conserved quantities that play a crucial role in its analytical and numerical analysis. For sufficiently smooth solutions, the following invariants are preserved \cite{Mendez2023kato}:
\begin{equation}\label{convquant}
\begin{aligned}
  \mathcal{I}(u) &= \int_{\Omega} u(\mathbf{x}, t)\,d\mathbf{x}, &&(\text{mass}), \\
  \mathcal{M}(u) &= \int_{\Omega} u^2(\mathbf{x}, t)\,d\mathbf{x}, &&(\text{momentum}),\\
  \mathcal{H}(u) &= \frac{1}{2} \int_{\Omega} \left|(-\Delta)^{\frac{\alpha}{4}} u(\mathbf{x},t)\right|^2\,d\mathbf{x}
  - \frac{1}{6} \int_{\Omega} u^3(\mathbf{x},t)\,d\mathbf{x}, &&(\text{Hamiltonian energy}).
\end{aligned}
\end{equation}
These quantities are the periodic analogues of the whole-space conservation laws and are preserved by sufficiently smooth periodic solutions of \eqref{eqn:fZK}. The conservation laws \eqref{convquant} are essential for understanding the long-time behavior of solutions and play a fundamental role in establishing well-posedness results \cite{Faminski1995,Mendez2023kato}.

The well-posedness theory for the classical ZK equation ($\alpha=2$) has been extensively developed. Faminski  \cite{Faminski1995} established global well-posedness in the energy space $H^1(\mathbb{R}^2)$, while Linares et al. \cite{LinaresPastor2009} proved local well-posedness in $H^s(\mathbb{R}^2)$ for $s > 3/4$ using the Fourier restriction method. Subsequent refinements in \cite{MolinetPilod2015} and \cite{GrunrockHerr2014} extended this to $H^s(\mathbb{R}^2)$ for $s > 1/2$. In the periodic setting, \cite{MolinetPilod2015} established local well-posedness in $H^s(\mathbb{T}^2)$ for $s > 1$. The three-dimensional case was treated in \cite{RibaudVento2012}, who proved local well-posedness in $H^s(\mathbb{R}^3)$ for $s > 1$. Recent breakthroughs in \cite{HerrKinoshita2021} and \cite{Kinoshita2021} established well-posedness in the optimal Sobolev ranges $H^s(\mathbb{R}^2)$ for $s > -1/4$ and $H^s(\mathbb{R}^n)$ for $s > (n-4)/2$ when $n \geq 3$.

The fZK equation with $\alpha=1$ can be reformulated as
\begin{equation}
    \partial_t u - \mathcal{R}_1 \Delta u + u\partial_{x_1} u = 0,
\end{equation}
where $\mathcal{R}_1$ denotes the Riesz transform in the $x_1$-direction, defined by
\begin{equation}
    \mathcal{R}_1 f(x) = \frac{\Gamma\left(\frac{n+1}{2}\right)}{\pi^{\frac{n+1}{2}}} \text{p.v.} \int_{\mathbb{R}^n} \frac{x_1 - y_1}{|x-y|^{n+1}} f(y) dy,
\end{equation}
where \text{p.v.} stands for the principal value.
This equation was first derived in \cite{Shrira1989} to model bi-dimensional long-wave perturbations in boundary-layer shear flows and represents a higher-dimensional generalization of the Benjamin-Ono equation \cite{benjamin1967internal,ono1975algebraic}
\begin{equation}
    \partial_t u - \mathcal{H}\partial_x^2 u + u\partial_x u = 0,
\end{equation}
where $\mathcal{H}$ is the Hilbert transform \cite{Tao2004}. The well-posedness theory for the $\alpha=1$ case was initiated in \cite{Hickman2019}, who established local well-posedness in $H^s(\mathbb{R}^n)$ for $s > 3/5$ in two dimensions and $s > n/2 + 1/2$ for $n \geq 3$. Schippa \cite{Schippa2020} improved these results to $H^s(\mathbb{R}^n)$ for $s > (n+3)/2 - \alpha$ with $n \geq 2$ and $\alpha \in [1,2)$. The periodic case was also addressed in \cite{Schippa2020}, establishing local well-posedness in $H^s(\mathbb{T}^n)$.

For the one-dimensional Benjamin--Ono equation itself, the well-posedness theory has seen remarkable progress. I\'orio \cite{Iorio1986} established local well-posedness in $H^{3/2}(\mathbb{R})$, while Tao \cite{Tao2004} proved global well-posedness in $H^1(\mathbb{R})$. The groundbreaking work \cite{killip2024sharp} established sharp global well-posedness in $H^{s}(\R)$ for $s>-\frac{1}{2}$, resolving a long-standing open problem. These developments have significantly influenced the study of related nonlocal dispersive equations.

For intermediate values $\alpha \in (0,2)/\{1\}$, the analytical theory remains less developed. Mendez \cite{Mendez2023kato} made significant progress by establishing a Kato-type smoothing effect, showing that solutions gain $\alpha/2$ derivatives locally in both space and time. This smoothing property, first discovered in \cite{Kato1983} for the KdV equation and later generalized in \cite{ConstantinSaut1988}, plays a crucial role in the analysis of dispersive equations. Additional regularity properties and smoothing effects for the solution of the related nonlocal equations were studied in \cite{GinibreVelo1989, GinibreVelo1991,KenigPonceVega1991, galtung2018convergent, dwivedi2024stability}.

Despite substantial analytical progress, numerical methods for the fZK equation \eqref{eqn:fZK} remain largely unexplored. For the classical ZK equation \eqref{zkeqn}, various numerical methods have been proposed, including multi-symplectic spectral discretizations in \cite{BridgesReich2001,Qian2012, li2015multi} and stable discontinuous Galerkin methods in \cite{xu2005local, SunLiuWei2016}. However, these works lack rigorous error analysis with nonlinear flux, convergence of approximate solutions to the unique solution of equation \eqref{zkeqn}, and spectral convergence. To the best of our knowledge, no numerical schemes with complete convergence analysis exist for the fZK equation \eqref{eqn:fZK}. Fourier spectral Galerkin (FSG) methods are particularly effective for periodic fractional and nonlocal PDEs because fractional powers of the Laplacian are diagonal in the Fourier basis, allowing accurate and efficient representation of nonlocal dispersion. This has been successfully used in the analysis and approximation of fractional and nonlocal evolution equations, including
fractional Cahn--Hilliard type models \cite{AinsworthMao2017SINA,AinsworthMao2017CSF}, fractional KdV
models \cite{dwivedi2024numerical}, and fractional Camassa--Holm equations \cite{dwivedi2025convergent}. On the time-discretization side,
integrating-factor and exponential Runge--Kutta methods are standard tools for
stiff dispersive spectral systems because they treat the high-order linear
oscillatory part exactly while keeping the nonlinear part explicit
\cite{CoxMatthews2002,HochbruckOstermann2010,KassamTrefethen2005,vaissmoradi2009error}.
These advantages motivate the integrating-factor fourth-order four-stage Runge--Kutta (IFRK4)--FSG framework developed in this paper.

This work presents a novel Fourier spectral Galerkin framework for the fZK equation \eqref{eqn:fZK}, providing the first complete numerical analysis for this model and thereby bridging a critical gap in the existing literature. The state-of-the-art methodology capitalizes on the inherent affinity between spectral techniques and periodic boundary conditions, combined with the computationally efficient representation of fractional operators within a Fourier basis \cite{canuto2006spectral, Carleson}. The main contributions of this paper are:
\begin{itemize}
    \item Formulation of a semi-discrete spectral scheme that exactly preserves the discrete analogs of the continuous mass, momentum, and energy invariants. This structure-preserving property ensures the long-term stability and physical fidelity of numerical simulations.
    
    \item Rigorous proof of existence and uniqueness for the semi-discrete solution. This is established through the application of standard ordinary differential equation theory to the Fourier coefficient system.
    
   \item Strong convergence analysis of the spectral approximations to the exact solution for the full fractional range \(0<\alpha\leq2\). Using uniform Sobolev bounds in \(H^s_{\mathrm{per}}(\Omega)\), \(s\geq 2+\alpha\), Rellich compactness, and the Arzelà--Ascoli theorem in \(L^2_{\mathrm{per}}(\Omega)\), we prove convergence to the unique local strong solution of the fZK equation \eqref{eqn:fZK}. The result extends to any time interval on which an independent \(H^s\)-bound for the exact solution is available.
    
    \item Derivation of optimal spatial error estimates through a modified projection operator adapted to the fractional transport-dispersive structure. A refined adjoint estimate allows the projection argument to remain uniform in the Fourier cut-off for all \(0<\alpha\leq2\). This yields an optimal \(\mathcal O(N^{-r})\) estimate for \(r>2+\alpha\), and exponential convergence for analytic solutions.
    
    \item Development of an IFRK4 time discretization that exactly integrates the stiff fractional dispersive part. We give the correct local truncation error formulation and a fourth-order temporal error estimate under a high-regularity nonlinear stability assumption for the semi-discrete and fully discrete trajectories.
    
    \item Comprehensive numerical investigation validating the theoretical results and demonstrating the method's performance across various fractional orders. The experiments include convergence verification, solitary wave propagation, and long-time behavior analysis.
\end{itemize}

The paper is organized as follows: Section \ref{sec2} introduces the functional framework and fractional operators. Section \ref{sec3} presents the Fourier-Galerkin scheme and establishes well-posedness. Section \ref{sec4} develops an optimal error analysis. Section \ref{sec5} describes the time discretization, and Section \ref{sec6} presents numerical experiments validating our theoretical results.

\section{Preliminaries: Function Spaces and Fractional Operators}
\label{sec2}

This section establishes the functional framework and analytical tools required for our analysis of the fZK equation \eqref{eqn:fZK}. We consider the problem on the two-dimensional torus $\Omega = [-\pi,\pi]^2$ with periodic boundary conditions. The space of square-integrable periodic functions is denoted by $L^2_{\mathrm{per}}(\Omega)$, equipped with the inner product and norm
\[
(u,v) = \int_{\Omega} u(\mathbf{x})\overline{v(\mathbf{x})}\,d\mathbf{x}, \quad \|u\| = (u,u)^{1/2}.
\]
For any real number $s \geq 0$, the periodic Sobolev space $H^s_{\mathrm{per}}(\Omega)$ consists of all functions $u$ whose Fourier coefficients $\hat{u}(\mathbf{k})$ satisfy
\[
\|u\|_s^2 = \sum_{\mathbf{k} \in \mathbb{Z}^2} (1 + |\mathbf{k}|^2)^s |\hat{u}(\mathbf{k})|^2 < \infty,
\]
where $|\mathbf{k}|^2 = k_1^2 + k_2^2$ and  every function $u \in H^s_{\mathrm{per}}(\Omega)$ admits a Fourier series expansion
\[
u(\mathbf{x}) = \sum_{\mathbf{k} \in \mathbb{Z}^2} \hat{u}(\mathbf{k}) e^{i\mathbf{k} \cdot \mathbf{x}}.
\] 
Note that $H^0_{\mathrm{per}}(\Omega)$ coincides with $L^2_{\mathrm{per}}(\Omega)$. For numerical approximation, we introduce the finite-dimensional space of trigonometric polynomials
\[
\mathbb{X}_N = \mathrm{span}\left\{ e^{i\mathbf{k} \cdot \mathbf{x}} : |\mathbf{k}|_{\infty} = \max\{|k_1|, |k_2|\} \leq N \right\}.
\]
The orthogonal projection $\Pi_N: L^2_{\mathrm{per}}(\Omega) \to \mathbb{X}_N$ is defined by
\[
\Pi_N u(\mathbf{x}) = \sum_{|\mathbf{k}|_{\infty} \leq N} \hat{u}(\mathbf{k}) e^{i\mathbf{k} \cdot \mathbf{x}},
\]
which satisfies the orthogonality condition
\begin{equation}
\label{eq:proj_ortho}
(\Pi_N u - u, \psi) = 0 \quad \text{for all } \psi \in \mathbb{X}_N.
\end{equation}
Since trigonometric polynomials are dense in $L^2_{\mathrm{per}}(\Omega)$, the orthogonal Fourier projection satisfies
\[
    \|\Pi_N u-u\|\to 0 \qquad \text{as } N\to\infty
\]
for every $u\in L^2_{\mathrm{per}}(\Omega)$, see \cite[Theo. 13.2]{hesthaven2017numerical}. More generally, if
$u\in H^r_{\mathrm{per}}(\Omega)$ with $r\geq 0$, then
\[
    \|\Pi_N u\|_r\leq \|u\|_r,
    \qquad
    \|\Pi_N u-u\|_r\to 0
    \qquad \text{as } N\to\infty .
\]
For later use, we also introduce the Bessel potential operator
\[
    \Lambda^s := (I-\Delta)^{s/2}, \qquad
    \Lambda^s f(\mathbf{x})
    =
    \sum_{\mathbf{k}\in\mathbb{Z}^2}
    (1+|\mathbf{k}|^2)^{s/2}\hat f(\mathbf{k})
    e^{i\mathbf{k}\cdot\mathbf{x}}.
\]
Thus $\|f\|_s=\|\Lambda^s f\|$. In two space dimensions, we shall use the standard Sobolev embeddings
\(H^s_{\mathrm{per}}(\Omega)\hookrightarrow L^\infty(\Omega)\)
    for  \(s>1\) and \( H^s_{\mathrm{per}}(\Omega)\hookrightarrow W^{1,\infty}(\Omega)\) for \(s>2\).
We now establish several fundamental properties of the fractional Laplacian that will be essential for our analysis.
\begin{lemma}
\label{lem:frac_properties}
The fractional Laplacian $(-\Delta)^{\alpha/2}$ satisfies the following properties:
\begin{enumerate}[label=(\roman*)]
    \item For $f, g \in H^{\alpha}_{\mathrm{per}}(\Omega)$, $\alpha \geq 0$, we have
    \begin{equation}
    \label{eq:frac_symmetry}
    \left((-\Delta)^{\alpha/2}f, g\right) = \left(f, (-\Delta)^{\alpha/2}g\right),
    \end{equation}
    and for real-valued $f\in H^{1+\alpha}_{\mathrm{per}}(\Omega)$,
    \begin{equation}
    \label{eq:frac_orthogonality}
    \left((-\Delta)^{\alpha/2}(\partial_{x_1}f), f\right) = 0.
    \end{equation}
    \item For $\alpha_1, \alpha_2 \geq 0$ and $f, g \in H^{\alpha_1 + \alpha_2}_{\mathrm{per}}(\Omega)$, we have
    \begin{equation}
    \label{eq:semigroup_property}
    \left((-\Delta)^{(\alpha_1 + \alpha_2)/2}f, g\right) = \left((-\Delta)^{\alpha_1/2}f, (-\Delta)^{\alpha_2/2}g\right),
    \end{equation}
    and equivalently,
    \begin{equation}
    \label{eq:commutative_property}
    (-\Delta)^{(\alpha_1 + \alpha_2)/2}f = (-\Delta)^{\alpha_1/2}(-\Delta)^{\alpha_2/2}f = (-\Delta)^{\alpha_2/2}(-\Delta)^{\alpha_1/2}f.
    \end{equation}
    
    \item For the orthogonal projection $\Pi_N$ and $f \in H^r_{\mathrm{per}}(\Omega)$ with $r \geq \alpha \geq 0$, the fractional Laplacian commutes with the projection:
    \begin{equation}
    \label{eq:projection_commute}
    (-\Delta)^{\alpha/2}(\Pi_N f) = \Pi_N((-\Delta)^{\alpha/2}f).
    \end{equation}
\end{enumerate}
\end{lemma}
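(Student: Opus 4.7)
The three properties all follow cleanly from the spectral definition of $(-\Delta)^{\alpha/2}$ on the torus, so my plan is to reduce every claim to a statement about Fourier coefficients via Parseval's identity and then verify the algebraic identity mode by mode.

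\textbf{Part (i).} The plan is to expand $f$ and $g$ in Fourier series, apply Parseval on $\Omega$ to write
\[
\bigl((-\Delta)^{\alpha/2} f, g\bigr) = (2\pi)^2 \sum_{\mathbf{k} \in \mathbb{Z}^2} |\mathbf{k}|^\alpha \hat{f}(\mathbf{k})\,\overline{\hat{g}(\mathbf{k})},
\]
and observe that the symbol $|\mathbf{k}|^\alpha$ is real, which immediately yields \eqref{eq:frac_symmetry}. Absolute convergence is guaranteed by Cauchy--Schwarz on the weighted $\ell^2$ norms, which is finite since $f,g \in H^\alpha_{\mathrm{per}}$. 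For \eqref{eq:frac_orthogonality}, using $\widehat{\partial_{x_1}f}(\mathbf{k}) = i k_1 \hat{f}(\mathbf{k})$, the same procedure gives
\[
\bigl((-\Delta)^{\alpha/2}(\partial_{x_1}f), f\bigr) = (2\pi)^2 \sum_{\mathbf{k} \in \mathbb{Z}^2} i\, k_1 |\mathbf{k}|^\alpha |\hat{f}(\mathbf{k})|^2,
\]
where I used $\hat{f}(-\mathbf{k}) = \overline{\hat{f}(\mathbf{k})}$ (valid since $f$ is real) to conclude $|\hat{f}(\mathbf{k})|^2 = |\hat{f}(-\mathbf{k})|^2$. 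Since the factor $k_1|\mathbf{k}|^\alpha$ is odd under $\mathbf{k} \mapsto -\mathbf{k}$ while $|\hat{f}(\mathbf{k})|^2$ is even, the sum vanishes by symmetry; absolute convergence, needed to rearrange and pair modes $\pm\mathbf{k}$, is ensured by $f \in H^{1+\alpha}_{\mathrm{per}}$.

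\textbf{Part (ii).} The semigroup identity \eqref{eq:semigroup_property} is immediate from the multiplicative structure $|\mathbf{k}|^{\alpha_1+\alpha_2} = |\mathbf{k}|^{\alpha_1} \cdot |\mathbf{k}|^{\alpha_2}$: Parseval gives
\[
\bigl((-\Delta)^{(\alpha_1+\alpha_2)/2}f, g\bigr) = (2\pi)^2 \sum_{\mathbf{k}} |\mathbf{k}|^{\alpha_1}\hat{f}(\mathbf{k})\,\overline{|\mathbf{k}|^{\alpha_2}\hat{g}(\mathbf{k})} = \bigl((-\Delta)^{\alpha_1/2}f, (-\Delta)^{\alpha_2/2}g\bigr).
\]
The operator identity \eqref{eq:commutative_property} follows from the same reasoning applied pointwise to Fourier coefficients: each operator is a Fourier multiplier, and Fourier multipliers commute and compose by multiplication of symbols.

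\textbf{Part (iii).} This is the most immediate: both $\Pi_N$ and $(-\Delta)^{\alpha/2}$ are Fourier multipliers (projection onto $|\mathbf{k}|_\infty \le N$ and multiplication by $|\mathbf{k}|^\alpha$, respectively), and multiplication operators on sequence spaces trivially commute. Explicitly, one writes out
\[
(-\Delta)^{\alpha/2}(\Pi_N f) = \sum_{|\mathbf{k}|_\infty \leq N} |\mathbf{k}|^\alpha \hat{f}(\mathbf{k})\,e^{i\mathbf{k}\cdot\mathbf{x}} = \Pi_N\bigl((-\Delta)^{\alpha/2}f\bigr),
\]
where convergence of the right-hand series in $L^2$ uses $f \in H^\alpha_{\mathrm{per}}$.

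\textbf{Main obstacle.} There is no real analytic obstacle here; the arguments are entirely routine once convergence of the Fourier series in the appropriate weighted $\ell^2$ is justified. The only subtlety worth stating carefully is the reality condition in \eqref{eq:frac_orthogonality}: one must invoke $\hat{f}(-\mathbf{k}) = \overline{\hat{f}(\mathbf{k})}$ explicitly to cancel the odd symbol $k_1|\mathbf{k}|^\alpha$, since for complex-valued $f$ the identity generally fails.
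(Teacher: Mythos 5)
Your proposal is correct and follows essentially the same route as the paper: every identity is reduced to Fourier coefficients via Parseval's identity and the multiplier symbols $|\mathbf{k}|^{\alpha}$, with parts (ii) and (iii) matching the paper's argument verbatim. The only minor divergence is \eqref{eq:frac_orthogonality}, where you cancel the odd symbol $k_1|\mathbf{k}|^{\alpha}$ by pairing the modes $\pm\mathbf{k}$ using $\hat{f}(-\mathbf{k})=\overline{\hat{f}(\mathbf{k})}$, whereas the paper deduces it from integration by parts combined with \eqref{eq:frac_symmetry} and \eqref{eq:commutative_property}; both arguments hinge on $f$ being real-valued, which you rightly flag as essential.
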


\begin{proof}
Property \eqref{eq:frac_symmetry} follows directly from Parseval's identity and from the fact that $f$ and $g$ are real valued
\[
\left((-\Delta)^{\alpha/2}f, g\right) = \sum_{\mathbf{k} \in \mathbb{Z}^2} |\mathbf{k}|^{\alpha} \hat{f}(\mathbf{k}) \overline{\hat{g}(\mathbf{k})} = \left(f, (-\Delta)^{\alpha/2}g\right).
\]
Periodicity of $f$, integration by parts, the commutative property of $(-\Delta)^{\alpha/2}$ with partial derivatives, and \eqref{eq:frac_symmetry} imply \eqref{eq:frac_orthogonality}.
Property \eqref{eq:semigroup_property} follows from the identity
\[
\left((-\Delta)^{(\alpha_1 + \alpha_2)/2}f, g\right) = \sum_{\mathbf{k} \in \mathbb{Z}^2} |\mathbf{k}|^{\alpha_1 + \alpha_2} \hat{f}(\mathbf{k}) \overline{\hat{g}(\mathbf{k})} = \left((-\Delta)^{\alpha_1/2}f, (-\Delta)^{\alpha_2/2}g\right).
\]
The commutative property \eqref{eq:commutative_property} is immediate from the commutativity of the Fourier multipliers $|\mathbf{k}|^{\alpha_1}$ and $|\mathbf{k}|^{\alpha_2}$. Finally, property \eqref{eq:projection_commute} holds because both operators act as Fourier multipliers, and $\Pi_N$ truncates high frequencies
\[
(-\Delta)^{\alpha/2}(\Pi_N f)(\mathbf{x}) = \sum_{|\mathbf{k}|_{\infty} \leq N} |\mathbf{k}|^{\alpha} \hat{f}(\mathbf{k}) e^{i\mathbf{k} \cdot \mathbf{x}} = \Pi_N((-\Delta)^{\alpha/2}f)(\mathbf{x}).
\]
This completes the proof.
\end{proof}

Next, we recall the product and commutator estimates that will be used to
control the nonlinear term. These estimates are norm estimates; no pointwise
fractional Leibniz rule is used.

\begin{lemma}[Fractional product and commutator estimates]
\label{lem:product_estimate}
Let $s>0$. There exists a constant $C_s>0$, depending only on $s$ and
$\Omega$, such that the following estimates hold.

\begin{enumerate}[label=(\roman*)]
    \item If $f,g\in H^s_{\mathrm{per}}(\Omega)\cap L^\infty(\Omega)$, then
    \begin{equation}
    \label{eq:KP_product}
        \|\Lambda^s(fg)\|
        \leq C_s\left(
        \|f\|_{L^\infty(\Omega)}\|g\|_s
        +
        \|g\|_{L^\infty(\Omega)}\|f\|_s
        \right).
    \end{equation}

    \item If $f,g\in H^s_{\mathrm{per}}(\Omega)\cap W^{1,\infty}(\Omega)$, then
    for $j=1,2$,
    \begin{equation}
    \label{eq:KP_commutator}
        \|[\Lambda^s,f]\partial_{x_j}g\|
        \leq C_s\left(
        \|\nabla f\|_{L^\infty(\Omega)}\|g\|_s
        +
        \|f\|_s\|\partial_{x_j}g\|_{L^\infty(\Omega)}
        \right),
    \end{equation}
    where
    \begin{equation}\label{eq:comm_decomp}
        [\Lambda^s,f]h:=\Lambda^s(fh)-f\Lambda^s h .
    \end{equation}
\end{enumerate}

In particular, if $s>2$, then
\begin{equation}
\label{eq:KP_diag}
    \|[\Lambda^s,f]\partial_{x_j}f\|
    \leq C_s \|f\|_s^2,
    \qquad f\in H^s_{\mathrm{per}}(\Omega).
\end{equation}
\end{lemma}

\begin{proof}
For smooth periodic functions, the product estimate \eqref{eq:KP_product} is the \(L^2\)-Sobolev form of the periodic fractional Leibniz rule on the torus; see \cite[Prop. 1 and (1.5)]{BenyiOhZhao2025}. For the corresponding Euclidean Kato--Ponce product inequality, see
\cite[Theo. 1]{GrafakosOh2014}. In the present $L^2$ form, it is obtained by applying the periodic Kato--Ponce inequality to $\Lambda^s(fg)$.

The commutator estimate \eqref{eq:KP_commutator} is the corresponding
Kato--Ponce commutator estimate; see \cite{GrafakosOh2014,KatoPonce1988,KenigPonceVega1993}.
Applied with $h=\partial_{x_j}g$, it gives
\[
    \|[\Lambda^s,f]\partial_{x_j}g\|
    \leq C_s\left(
    \|\nabla f\|_{L^\infty(\Omega)}
    \|\Lambda^{s-1}\partial_{x_j}g\|
    +
    \|\Lambda^s f\|\|\partial_{x_j}g\|_{L^\infty(\Omega)}
    \right).
\]
Since
\[
    \|\Lambda^{s-1}\partial_{x_j}g\|\leq \|g\|_s,
    \qquad
    \|\Lambda^s f\|=\|f\|_s,
\]
we obtain \eqref{eq:KP_commutator}. The extension from smooth periodic
functions to the stated Sobolev classes follows by standard density.

Finally, if $s>2$, then
$H^s_{\mathrm{per}}(\Omega)\hookrightarrow W^{1,\infty}(\Omega)$, and
\eqref{eq:KP_diag} follows immediately from \eqref{eq:KP_commutator}.
\end{proof}

Finally, we state some useful inequalities for trigonometric polynomials in $\mathbb{X}_N$.

\begin{lemma}[Inverse and Sobolev Inequalities]
\label{lem:inverse_inequalities}
For any $v_N \in \mathbb{X}_N$, the following estimates hold:
\begin{enumerate}[label=(\roman*)]
    \item Inverse inequality:
    \[
    \|\partial_{x_j} v_N\| \leq N \|v_N\|, \quad j = 1, 2.
    \]
    \item Sobolev inequality:
    \[
    \|v_N\|_{L^\infty(\Omega)} \leq 3 N \|v_N\|.
    \]
    \item For any $r \geq 0$, we have the embedding
    \[
    \|v_N\| \leq \|v_N\|_r.
    \]
\end{enumerate}
\end{lemma}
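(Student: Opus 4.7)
All three inequalities follow from writing the trigonometric polynomial in its Fourier form, $v_N = \sum_{|\mathbf{k}|_\infty \leq N} \hat{v}_N(\mathbf{k}) e^{i\mathbf{k}\cdot\mathbf{x}}$, and invoking Parseval's identity. The plan is to dispatch them in order of increasing difficulty: the embedding (iii) first, then the inverse inequality (i), and finally the Sobolev-type bound (ii).

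The embedding in (iii) is immediate: since $(1+|\mathbf{k}|^2)^r \geq 1$ whenever $r \geq 0$, each term in the defining sum of $\|v_N\|_r^2$ dominates the corresponding term in the Plancherel expression for $\|v_N\|^2$, and summing over $|\mathbf{k}|_\infty \leq N$ finishes the estimate. For (i), differentiation acts on the Fourier basis as multiplication by $i k_j$, so Parseval yields
\[
\|\partial_{x_j} v_N\|^2 \;\propto\; \sum_{|\mathbf{k}|_\infty \leq N} k_j^2 |\hat{v}_N(\mathbf{k})|^2 \;\leq\; N^2 \sum_{|\mathbf{k}|_\infty \leq N} |\hat{v}_N(\mathbf{k})|^2 \;\propto\; N^2 \|v_N\|^2,
\]
after using $|k_j| \leq |\mathbf{k}|_\infty \leq N$ on the support of the Fourier coefficients, with the same normalization constant appearing on both sides.

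The Sobolev estimate in (ii) is the step that asks for the most care. For any $\mathbf{x} \in \Omega$, I would first apply the triangle inequality to the Fourier series and then Cauchy-Schwarz to peel off the $\ell^2$-norm of the coefficients:
\[
|v_N(\mathbf{x})| \;\leq\; \sum_{|\mathbf{k}|_\infty \leq N} |\hat{v}_N(\mathbf{k})| \;\leq\; \bigl((2N+1)^2\bigr)^{1/2}\Bigl(\sum_{|\mathbf{k}|_\infty \leq N} |\hat{v}_N(\mathbf{k})|^2\Bigr)^{1/2},
\]
where $(2N+1)^2$ counts the integer lattice points in the square $\{|\mathbf{k}|_\infty \leq N\}$. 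Converting the $\ell^2$-sum back to $\|v_N\|$ through Parseval and observing that $2N+1 \leq 3N$ for $N \geq 1$ gives the claimed constant. I do not foresee any genuine obstacle in any of the three parts; the only item demanding mild attention is bookkeeping of the Fourier normalization constants, which must be applied consistently on both sides of each inequality.
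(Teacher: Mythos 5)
Your proposal is correct and follows essentially the same route as the paper's proof: Parseval with $|k_j|\leq N$ for part (i), triangle inequality plus Cauchy--Schwarz with the lattice-point count $(2N+1)^2$ and $2N+1\leq 3N$ for part (ii), and the trivial comparison $(1+|\mathbf{k}|^2)^r\geq 1$ for part (iii). Your extra attention to the Fourier normalization constants is sound (they cancel in (i) and only improve the constant in (ii)) but changes nothing substantive.
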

\begin{proof}
The inverse inequality follows from Parseval's identity and the fact that $|k_j| \leq N$ for $|\mathbf{k}|_{\infty} \leq N$:
\[
\|\partial_{x_j} v_N\|^2 = \sum_{|\mathbf{k}|_{\infty} \leq N} |k_j|^2 |\hat{v}_N(\mathbf{k})|^2 \leq N^2 \sum_{|\mathbf{k}|_{\infty} \leq N} |\hat{v}_N(\mathbf{k})|^2 = N^2 \|v_N\|^2.
\]
The Sobolev inequality is proved using the Cauchy-Schwarz inequality and the fact that there are $O(N^2)$ Fourier modes:
\[
|v_N(\mathbf{x})| \leq \sum_{|\mathbf{k}|_{\infty} \leq N} |\hat{v}_N(\mathbf{k})| \leq \left( \sum_{|\mathbf{k}|_{\infty} \leq N} 1 \right)^{1/2} \left( \sum_{|\mathbf{k}|_{\infty} \leq N} |\hat{v}_N(\mathbf{k})|^2 \right)^{1/2} \leq 3 N \|v_N\|.
\]
The embedding result is immediate from the definition of the Sobolev norm.
\end{proof}

\section{Fourier-Galerkin scheme and well-posedness}\label{sec3}
We consider the fractional Zakharov-Kuznetsov (fZK) equation \eqref{eqn:fZK} on the periodic domain $\Omega$. Multiplying by a test function $v \in H^{\alpha}_{\text{per}}(\Omega)$ and integrating by parts yields the variational form. Find $u \in H^{\alpha}_{\text{per}}(\Omega)$ such that
\begin{equation}
(\partial_t u, v) + ((-\Delta)^{\frac{\alpha}{2}} u, \partial_{x_1} v) - \frac{1}{2}(u^2, \partial_{x_1} v) = 0, \quad \forall \,v \in H^{1+\alpha}_{\text{per}}(\Omega).
\end{equation}
  The Fourier spectral-Galerkin approximation consists of finding $u_N \in \mathbb{X}_N$ such that
\begin{equation}\label{fsg:fZK}
(\partial_t u_N, v) + ((-\Delta)^{\frac{\alpha}{2}} u_N, \partial_{x_1} v) - \frac{1}{2}(u_N^2, \partial_{x_1} v) = 0 \quad \forall v \in \mathbb{X}_N,
\end{equation}
with the initial condition $(u_N(0),v) = (u_0,v)$.

Let $
u_N(\mathbf{x}, t) = \sum_{|\mathbf{k}|_\infty \leq N} \hat{u}_N(\mathbf{k}, t) e^{i \mathbf{k} \cdot \mathbf{x}}$
and taking test functions $v(\mathbf{x}) = e^{i \mathbf{m} \cdot \mathbf{x}}$ for $|\mathbf{m}|_\infty \leq N$, we obtain the following system of ordinary differential equations for the Fourier coefficients:
\begin{equation}
\frac{d}{dt} \hat{u}_N(\mathbf{m}, t) = i m_1 |\mathbf{m}|^\alpha \hat{u}_N(\mathbf{m}, t) - \frac{1}{2} i m_1 \sum_{\substack{|\mathbf{k}|_\infty \leq N \\ |\mathbf{m} - \mathbf{k}|_\infty \leq N}} \hat{u}_N(\mathbf{k}, t) \hat{u}_N(\mathbf{m} - \mathbf{k}, t), \quad |\mathbf{m}|_\infty \leq N.
\end{equation}
Let $M := (2N+1)^2$ and let $U(t)$ denote the vector of all Fourier coefficients $\hat{u}_N(\mathbf{m}, t)$ for $|\mathbf{m}|_\infty \leq N$. Then the above system can be written as
\begin{equation}\label{ode-sys}
\frac{dU}{dt}(t) = G(U(t)),
\end{equation}
where $G: \mathbb{R}^M \to \mathbb{R}^M$ is defined by
\begin{equation*}
G(U(t)) = \{ G_j(U(t)) \}_{1\leq j \leq M},
\end{equation*}
and each component function 
 $G_{j}: \mathbb{R}^M \to \mathbb{R}$ ($1\leq j \leq M$) are smooth and locally Lipschitz continuous.
The initial condition $u_N(0) = \Pi_N u_0$ implies
\begin{equation}\label{ini-data}
U(0)
=
\left\{\hat u_N(\mathbf m,0)\right\}_{|\mathbf m|_\infty\leq N} = \left\{\hat u_0(\mathbf m)\right\}_{|\mathbf m|_\infty\leq N}.
\end{equation}
By the Picard-Lindelöf theorem, the system \eqref{ode-sys} with initial condition \eqref{ini-data} has a unique solution $U(t) \in C^1([0, T_{\text{max}}); \mathbb{R}^M)$ for some $T_{\text{max}}>0$. To uniquely extend the solution for all $t > 0$, we will show that the $L^2$ norm of $u_N$ is conserved.

\begin{theorem}\label{conslemma}
The scheme \eqref{fsg:fZK} conserves mass, momentum, and the Hamiltonian specified by \eqref{convquant}. In particular, the approximation $u_N \in \mathbb{X}_N$ obtained by the scheme \eqref{fsg:fZK} preserves the following invariants
\begin{align}
& \frac{d}{dt}\mathcal{I}(u_N)  = 0, \label{cons1} \\
 &\frac{d}{dt}\mathcal M(u_N) = 0, \label{cons2}\\
 &\frac{d}{dt}\mathcal  H(u_N) = 0. \label{cons3}
\end{align}
\end{theorem}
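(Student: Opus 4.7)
The scheme \eqref{fsg:fZK} holds for every test function in $\mathbb{X}_N$, so the strategy for each invariant is to choose a clever test function that makes the left-hand side equal to the time derivative of the target quantity, and then show that the remaining terms vanish by using the properties of Lemma \ref{lem:frac_properties}, integration by parts, and periodicity.

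\textbf{Mass.} I would choose $v\equiv 1\in\mathbb{X}_N$. Then $\partial_{x_1}v=0$ annihilates the last two terms of \eqref{fsg:fZK}, leaving $(\partial_t u_N,1)=\frac{d}{dt}\int_\Omega u_N\,d\mathbf{x}=0$, which is \eqref{cons1}.

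\textbf{Momentum.} I would test with $v=u_N\in\mathbb{X}_N$. The time term yields $\tfrac12\frac{d}{dt}\|u_N\|^2$. For the dispersive term, applying \eqref{eq:frac_symmetry} (or equivalently \eqref{eq:frac_orthogonality}) moves $(-\Delta)^{\alpha/2}$ onto $\partial_{x_1}u_N$ so that the expression becomes $\bigl((-\Delta)^{\alpha/2}\partial_{x_1}u_N,u_N\bigr)=0$. The nonlinear term is $\tfrac12(u_N^2,\partial_{x_1}u_N)=\tfrac16\int_\Omega\partial_{x_1}(u_N^3)\,d\mathbf{x}=0$ by periodicity, giving \eqref{cons2}.

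\textbf{Hamiltonian.} This is the delicate step. Differentiating in time,
\[
\frac{d}{dt}\mathcal{H}(u_N)=\bigl((-\Delta)^{\alpha/2}u_N,\partial_t u_N\bigr)-\tfrac12(u_N^2,\partial_t u_N).
\]
The natural candidate test function $w=(-\Delta)^{\alpha/2}u_N-\tfrac12 u_N^2$ is \emph{not} in $\mathbb{X}_N$ because $u_N^2\in\mathbb{X}_{2N}$. The key trick is to instead take
\[
v=(-\Delta)^{\alpha/2}u_N-\tfrac12\Pi_N(u_N^2)\in\mathbb{X}_N,
\]
and exploit $\partial_t u_N\in\mathbb{X}_N$ together with the self-adjointness of $\Pi_N$ to replace $\Pi_N(u_N^2)$ by $u_N^2$ in the time-derivative pairing; this reproduces exactly $\frac{d}{dt}\mathcal{H}(u_N)$. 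It then remains to show that
\[
\bigl((-\Delta)^{\alpha/2}u_N,\partial_{x_1}v\bigr)-\tfrac12(u_N^2,\partial_{x_1}v)=0.
\]
I expand this into four terms and handle them as follows. The term $((-\Delta)^{\alpha/2}u_N,\partial_{x_1}(-\Delta)^{\alpha/2}u_N)$ vanishes because it equals $\tfrac12\int_\Omega\partial_{x_1}\bigl((-\Delta)^{\alpha/2}u_N\bigr)^2 d\mathbf{x}=0$. The term $(u_N^2,\partial_{x_1}\Pi_N(u_N^2))$ vanishes since $\partial_{x_1}\Pi_N(u_N^2)\in\mathbb{X}_N$ and, writing $u_N^2=\Pi_N(u_N^2)+(I-\Pi_N)(u_N^2)$, the projection orthogonality \eqref{eq:proj_ortho} plus periodicity kills both pieces. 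For the two remaining cross terms,
\[
-\tfrac12\bigl((-\Delta)^{\alpha/2}u_N,\partial_{x_1}\Pi_N(u_N^2)\bigr)-\tfrac12\bigl(u_N^2,\partial_{x_1}(-\Delta)^{\alpha/2}u_N\bigr),
\]
I use that $\Pi_N$ commutes with $\partial_{x_1}$ and that $(-\Delta)^{\alpha/2}u_N\in\mathbb{X}_N$ to drop the projection in the first pairing via \eqref{eq:proj_ortho}, reducing the sum to $-\tfrac12\int_\Omega\partial_{x_1}\bigl((-\Delta)^{\alpha/2}u_N\cdot u_N^2\bigr)d\mathbf{x}=0$ by the product rule and periodicity. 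This yields \eqref{cons3}.

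The main obstacle is the Hamiltonian case, specifically the fact that $u_N^2\notin\mathbb{X}_N$ forces the use of the projected test function $(-\Delta)^{\alpha/2}u_N-\tfrac12\Pi_N(u_N^2)$, and one must keep careful track of when $\Pi_N$ may be removed (via orthogonality against elements of $\mathbb{X}_N$) versus kept (so that $v$ lies in $\mathbb{X}_N$).
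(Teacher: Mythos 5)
Your proposal is correct and follows essentially the same route as the paper: your test function $v=(-\Delta)^{\alpha/2}u_N-\tfrac12\Pi_N(u_N^2)$ is exactly the paper's $\Pi_N\bigl((-\Delta)^{\alpha/2}u_N-\tfrac12 u_N^2\bigr)$ since $(-\Delta)^{\alpha/2}u_N\in\mathbb{X}_N$, and the mass and momentum arguments coincide with the paper's. The only cosmetic difference is that the paper kills the spatial term in one stroke by inserting $\Pi_N$ on the left factor to obtain an expression of the form $(w,\partial_{x_1}w)=0$, whereas you expand into four terms and dispatch them individually with the same orthogonality and periodicity facts.
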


\begin{proof}
Mass conservation \eqref{cons1} follows from taking $v = 1$ in \eqref{fsg:fZK}. Now take $v = u_N$ in \eqref{fsg:fZK}, we have
\[
(\partial_t u_N, u_N) + ((-\Delta)^{\frac{\alpha}{2}} u_N, \partial_{x_1} u_N) - \frac{1}{2}(u_N^2, \partial_{x_1} u_N) = 0.
\]
The second term vanishes due to the property \eqref{eq:frac_orthogonality} and the third term vanishes due to periodicity:
\[
(u_N^2, \partial_{x_1} u_N) = \frac{1}{3} \int_{\Omega} \partial_{x_1} (u_N^3)  d\mathbf{x} = 0.
\]
This implies \eqref{cons2}. 
Now we take $v = \Pi_N \left( (-\Delta)^{\frac{\alpha}{2}} u_N - \frac{1}{2} u_N^2 \right) \in \mathbb{X}_N$ in \eqref{fsg:fZK}, we have 
\[
\left(\partial_t u_N, \Pi_N \left( (-\Delta)^{\frac{\alpha}{2}} u_N - \frac{1}{2} u_N^2 \right)\right) + \left((-\Delta)^{\frac{\alpha}{2}} u_N - \frac{1}{2}u_N^2, \partial_{x_1} \Pi_N \left( (-\Delta)^{\frac{\alpha}{2}} u_N - \frac{1}{2} u_N^2 \right)\right)= 0.
\]
Utilizing the projection properties, we have 
\[
\left(\partial_t u_N,   (-\Delta)^{\frac{\alpha}{2}} u_N - \frac{1}{2} u_N^2 \right) +\left(\Pi_N\left((-\Delta)^{\frac{\alpha}{2}} u_N - \frac{1}{2}u_N^2\right), \partial_{x_1} \Pi_N \left( (-\Delta)^{\frac{\alpha}{2}} u_N - \frac{1}{2} u_N^2 \right)\right)= 0.
\]
The second term above vanishes due to the periodicity of $u_N$ and property \eqref{eq:frac_symmetry} implies 
\[
\left(\partial_t u_N,   (-\Delta)^{\frac{\alpha}{2}} u_N - \frac{1}{2} u_N^2 \right) = \frac{d}{dt}\left(\frac{1}{2}((-\Delta)^{\frac{\alpha}{4}} u_N)^2- \frac{1}{6}u_N^3,  1 \right) =0,
\]
which implies \eqref{cons3}.
\end{proof}

Now we provide the local existence and uniqueness result for the fZK equation \eqref{eqn:fZK} in the regularity class needed for the Galerkin convergence argument.

\begin{theorem}\label{thm:exisuq}
Let \(0<\alpha\leq2\), and let \(s\geq 2+\alpha\). Assume that
\(u_0\in H^s_{\mathrm{per}}(\Omega)\). Then there exists a time
\(\bar T>0\), depending on \(\|u_0\|_s\), such that the fZK equation
\eqref{eqn:fZK} has a unique strong solution
\[
    u\in L^\infty(0,\bar T;H^s_{\mathrm{per}}(\Omega))
    \cap W^{1,\infty}(0,\bar T;L^2_{\mathrm{per}}(\Omega)).
\]
Moreover, the semi-discrete Fourier--Galerkin approximations \(u_N\)
generated by \eqref{fsg:fZK} converge to \(u\) strongly in
\(C([0,\bar T];L^2_{\mathrm{per}}(\Omega))\).
\end{theorem}

To prove the above theorem, we first state and prove the following convergence lemma. Let us first introduce the definition of a weak solution for the fZK equation \eqref{eqn:fZK}. A function
$u \in L^\infty([0,T];L^2_{\mathrm{per}}(\Omega))$ is said to be a weak
solution of \eqref{eqn:fZK} with initial data $u_0 \in L^2_{\mathrm{per}}(\Omega)$ if, for every test function $\varphi \in C_c^\infty([0,T)\times\Omega)$, the following variational
formulation holds:
\begin{equation}\label{weaksoln}
    \int_0^T (u,\partial_t\varphi)\,dt
    -\int_0^T \left(u,(-\Delta)^{\frac{\alpha}{2}}\partial_{x_1}\varphi\right)\,dt
    +\frac{1}{2}\int_0^T \left(u^2,\partial_{x_1}\varphi\right)\,dt
    +(u_0,\varphi(0))=0.
\end{equation}

\begin{lemma}\label{conv_lemma}
Let \(0<\alpha\leq2\), and let \(s\geq 2+\alpha\). Assume
\(u_0\in H^s_{\mathrm{per}}(\Omega)\), and let \(u_N\) be the approximate
solution of the fZK equation \eqref{eqn:fZK} obtained by the scheme
\eqref{fsg:fZK} with \(u_N(0)=\Pi_Nu_0\). Then there exists a finite time
\(\bar T>0\), depending on \(\|u_0\|_s\) but independent of \(N\), such that
\begin{align}
    \label{bd1}\|u_N(\cdot,t)\| &\leq C,\\
    \label{bd2}\|\partial_tu_N(\cdot,t)\| &\leq C,\\
    \label{bd3}\|u_N(\cdot,t)\|_s &\leq C,
\end{align}
for all \(0\leq t\leq\bar T\), where
\(C=C(\|u_0\|_s,\alpha,s,\bar T)\) is independent of \(N\). Moreover, there exists a subsequence, still denoted by $u_N$,
and a function
\[
    \bar u\in L^\infty(0,\bar T;H^s_{\mathrm{per}}(\Omega))
    \cap W^{1,\infty}(0,\bar T;L^2_{\mathrm{per}}(\Omega)),
\]
such that
\[
    u_N\to \bar u
    \qquad\text{strongly in }
    C([0,\bar T];L^2_{\mathrm{per}}(\Omega)).
\]
The limit $\bar u$ is the unique strong solution of \eqref{eqn:fZK} on
$[0,\bar T]$. Consequently, the whole sequence $u_N$ converges to $\bar u$ in $C([0,\bar T];L^2_{\mathrm{per}}(\Omega))$.
\end{lemma}

\begin{proof}
The $L^2$ bound \eqref{bd1} follows from the conservation of momentum in
Theorem \ref{conslemma}. We now derive the uniform higher Sobolev estimate. Since $u_N\in\mathbb X_N$
and $\Lambda^{2s}u_N\in\mathbb X_N$, we may take
$v=\Lambda^{2s}u_N$ in \eqref{fsg:fZK}. This gives
\[
    \frac12\frac{d}{dt}\|u_N\|_s^2
    +
    \left((-\Delta)^{\frac{\alpha}{2}}u_N,
    \partial_{x_1}\Lambda^{2s}u_N\right)
    +
    \left(u_N\partial_{x_1}u_N,\Lambda^{2s}u_N\right)
    =0.
\]
The dispersive term vanishes. Indeed, since $\Lambda^s$ commutes with
$\partial_{x_1}$ and $(-\Delta)^{\alpha/2}$,
\[
    \left((-\Delta)^{\frac{\alpha}{2}}u_N,
    \partial_{x_1}\Lambda^{2s}u_N\right)
    =
    \left((-\Delta)^{\frac{\alpha}{2}}\Lambda^s u_N,
    \partial_{x_1}\Lambda^s u_N\right)=0,
\]
by the skew-symmetry property in Lemma \ref{lem:frac_properties}. Hence
\[
    \frac12\frac{d}{dt}\|u_N\|_s^2 =- I_N,
    \qquad  I_N:= \left(\Lambda^s(u_N\partial_{x_1}u_N),\Lambda^s u_N\right).
\]
We estimate $I_N$ by a commutator decomposition \eqref{eq:comm_decomp}. Therefore
\[
    I_N
    =
    \left(u_N\partial_{x_1}\Lambda^s u_N,\Lambda^s u_N\right)
    +
    \left([\Lambda^s,u_N]\partial_{x_1}u_N,\Lambda^s u_N\right).
\]
For the first term, periodicity gives \(
    \left(u_N\partial_{x_1}\Lambda^s u_N,\Lambda^s u_N\right) = -\frac12
    \left(\partial_{x_1}u_N,|\Lambda^s u_N|^2\right)\), and hence
\[
    \left|
    \left(u_N\partial_{x_1}\Lambda^s u_N,\Lambda^s u_N\right)
    \right|
    \leq
    \frac12\|\partial_{x_1}u_N\|_{L^\infty(\Omega)}
    \|u_N\|_s^2 .
\]
For the commutator term, Lemma \ref{lem:product_estimate} gives
\[
    \|[\Lambda^s,u_N]\partial_{x_1}u_N\|
    \leq
    C_s\left(
    \|\nabla u_N\|_{L^\infty(\Omega)}\|u_N\|_s
    +
    \|u_N\|_s\|\partial_{x_1}u_N\|_{L^\infty(\Omega)}
    \right).
\]
Since $s>2$, the embedding
$H^s_{\mathrm{per}}(\Omega)\hookrightarrow W^{1,\infty}(\Omega)$ implies
\[
    \|\nabla u_N\|_{L^\infty(\Omega)}
    +
    \|\partial_{x_1}u_N\|_{L^\infty(\Omega)}
    \leq C\|u_N\|_s .
\]
Consequently,
\[
    |I_N|
    \leq C_s\|u_N\|_s^3 .
\]
Thus
\begin{equation}\label{diffine}
    \frac{d}{dt}\|u_N(t)\|_s^2
    \leq
    C_s\|u_N(t)\|_s^3 .
\end{equation}
Define $D(t) := \|u_N(t)\|_{s}^2$. From the differential inequality \eqref{diffine}, we get
\(
\frac{dD}{dt} \leq C D^{3/2},
\)
 with initial condition \(D(0) = D_0 := \|u_0\|_{s}^2\). Consider the equation
\[
\frac{de}{dt} = C e^{3/2}, \quad e(0) = D_0,
\]
where $C$ is independent of $t$. Solving by separation of variables yields
\[
e(t) = \left( \frac{1}{\sqrt{D_0}} - \frac{Ct}{2} \right)^{-2},
\]
which is increasing and convex for \(t < T_b := \frac{2}{C\sqrt{D_0}}\). Define
\(
\bar T := \frac{T_b}{2} = \frac{1}{C\sqrt{D_0}},
\)
which gives the bound
\[
e(t) \leq e(\bar T) = 4D_0, \quad \text{for all } t \leq \bar T.
\]
By standard comparison theory for differential inequalities, we conclude
\[
D(t) \leq e(t) \leq 4D_0 \quad \text{for } t \leq \bar T,
\]
and consequently
\[
\|u_N(t)\|_{s} \leq 2\|u_0\|_{s} \leq C,
\]
where the constant $C$ depends on the initial data. This proves \eqref{bd3} for all $t\leq \bar T$. Now take the test function $v = \partial_t u_N$ in \eqref{fsg:fZK}, which yields
\begin{equation*}
\norm{\partial_t u_N}^2 =  ((-\Delta)^{\frac{\alpha}{2}} \partial_{x_1}u_N, \partial_t u_N) - (u_N \partial_{x_1}u_N , \partial_t u_N).
\end{equation*}
Utilizing the Cauchy-Schwarz inequality, $s\geq2+\alpha$, and the bounds \eqref{bd1} and \eqref{bd3} with the Sobolev embedding, we have
\begin{equation*}
\norm{\partial_t u_N} \leq   \norm{(-\Delta)^{\frac{\alpha}{2}} \partial_{x_1}u_N} +  \norm{u_N}_{L^\infty(\Omega)} \norm{u_N}_{1+\alpha}  \leq C\|u_N\|_s+C\|u_N\|_s^2, \qquad \forall\, t\leq \bar T.
\end{equation*}
Together with \eqref{bd3}, this gives \eqref{bd2}. We next prove compactness. The estimate \eqref{bd2} gives equicontinuity
in $L^2_{\mathrm{per}}(\Omega)$:
\[
    \|u_N(t)-u_N(\tau)\|
    \leq
    \int_\tau^t \|\partial_tu_N(r)\|\,dr
    \leq C|t-\tau|,
    \qquad 0\leq \tau<t\leq \bar T.
\]
Moreover, by \eqref{bd3} and Rellich's compactness theorem, \( H^s_{\mathrm{per}}(\Omega)\Subset L^2_{\mathrm{per}}(\Omega)\), so for each fixed $t$, the set $\{u_N(t)\}_{N\in\mathbb N}$ is relatively
compact in $L^2_{\mathrm{per}}(\Omega)$. The Arzelà--Ascoli theorem in the metric space $L^2_{\mathrm{per}}(\Omega)$, equivalently Simon's compactness theorem \cite{Simon1987}, gives a subsequence, still denoted by $u_N$, and a function $\bar u$ such that
\[
    u_N\to \bar u \qquad\text{strongly in } C([0,\bar T];L^2_{\mathrm{per}}(\Omega)).
\]
In addition, the uniform bounds imply, after extracting a further subsequence
if necessary,
\[
    u_N\xrightharpoonup{\ast} \bar u
    \quad\text{in } L^\infty(0,\bar T;H^s_{\mathrm{per}}(\Omega)),
\]
and
\[
    \partial_tu_N\xrightharpoonup{\ast} \partial_t\bar u
    \quad\text{in } L^\infty(0,\bar T;L^2_{\mathrm{per}}(\Omega)).
\]
The weak-\(\ast\) convergences implies the regularity of the limit,  that is, \(\bar u\)
    in  \(L^\infty(0,\bar T;H^s_{\mathrm{per}}(\Omega))\) and \(\partial_t\bar u\)
    in  \(L^\infty(0,\bar T;L^2_{\mathrm{per}}(\Omega))\). 

We now pass to the limit in the Galerkin formulation. Let
$\varphi\in C_c^\infty([0,\bar T)\times\Omega)$ and choose
$v=\Pi_N\varphi\in \mathbb X_N$ in \eqref{fsg:fZK}. Integrating in time,
\[
    \int_0^{\bar T}
    \left[
    (\partial_tu_N,\Pi_N\varphi)
    +
    \left((-\Delta)^{\frac{\alpha}{2}}u_N,
    \partial_{x_1}\Pi_N\varphi\right)
    -
    \frac12
    \left(u_N^2,\partial_{x_1}\Pi_N\varphi\right)
    \right]dt=0.
\]
For the time derivative term, integration by parts gives
\[
    \int_0^{\bar T}(\partial_tu_N,\Pi_N\varphi)\,dt
    =
    -\int_0^{\bar T}(u_N,\partial_t\Pi_N\varphi)\,dt
    -(u_N(0),\Pi_N\varphi(0)).
\]
Since $u_N\to\bar u$ strongly in $C([0,\bar T];L^2)$,
$\Pi_N\varphi\to\varphi$ smoothly, and $u_N(0)=\Pi_Nu_0\to u_0$ in $L^2$,
we obtain
\[
    \int_0^{\bar T}(\partial_tu_N,\Pi_N\varphi)\,dt
    \to
    -\int_0^{\bar T}(\bar u,\partial_t\varphi)\,dt
    -(u_0,\varphi(0)).
\]
For the dispersive term, using the self-adjointness of
$(-\Delta)^{\alpha/2}$ and the commutation with $\partial_{x_1}$ and
$\Pi_N$,
\[
    \int_0^{\bar T}
    \left((-\Delta)^{\frac{\alpha}{2}}u_N,
    \partial_{x_1}\Pi_N\varphi\right)dt
    =
    \int_0^{\bar T}
    \left(u_N,
    (-\Delta)^{\frac{\alpha}{2}}\partial_{x_1}\Pi_N\varphi\right)dt .
\]
The strong $L^2$ convergence of $u_N$ and the smooth convergence of
$\Pi_N\varphi$ imply
\[
    \int_0^{\bar T}
    \left((-\Delta)^{\frac{\alpha}{2}}u_N,
    \partial_{x_1}\Pi_N\varphi\right)dt
    \to
    \int_0^{\bar T}
    \left(\bar u,
    (-\Delta)^{\frac{\alpha}{2}}\partial_{x_1}\varphi\right)dt .
\]
For the nonlinear term, the strong convergence in
$C([0,\bar T];L^2)$ and the uniform bound in
$L^\infty(0,\bar T;H^s_{\mathrm{per}}(\Omega))$ imply
\[
    u_N^2\to \bar u^2
    \qquad\text{strongly in }
    C([0,\bar T];L^2_{\mathrm{per}}(\Omega)).
\]
Indeed,
\[
    \|u_N^2-\bar u^2\|
    \leq
    \|u_N-\bar u\|
    \left(
    \|u_N\|_{L^\infty(\Omega)}
    +
    \|\bar u\|_{L^\infty(\Omega)}
    \right),
\]
and the factor in parentheses is uniformly bounded because $s>2$.
Therefore
\[
    \int_0^{\bar T}
    \left(u_N^2,\partial_{x_1}\Pi_N\varphi\right)dt
    \to
    \int_0^{\bar T}
    \left(\bar u^2,\partial_{x_1}\varphi\right)dt .
\]
Passing to the limit gives exactly \eqref{weaksoln}. Hence $\bar u$ is a
weak solution. Since \( \bar u\in L^\infty(0,\bar T;H^s_{\mathrm{per}}(\Omega))\) for \(s\geq 1+\alpha\), we have
\[
    (-\Delta)^{\frac{\alpha}{2}}\partial_{x_1}\bar u
    \in L^\infty(0,\bar T;L^2_{\mathrm{per}}(\Omega)).
\]
Since also $s>2$,
\[
    \bar u\,\partial_{x_1}\bar u
    \in L^\infty(0,\bar T;L^2_{\mathrm{per}}(\Omega)).
\]
Thus $\partial_t\bar u\in L^\infty(0,\bar T;L^2_{\mathrm{per}}(\Omega))$,
and the equation holds as an $L^2$-identity almost everywhere in time. Hence
$\bar u$ is a strong solution. It remains to prove uniqueness. Let $\bar u$ and $\bar w$ be two strong
solutions in \( L^\infty(0,\bar T;H^s_{\mathrm{per}}(\Omega))
    \cap W^{1,\infty}(0,\bar T;L^2_{\mathrm{per}}(\Omega))\) with $\bar{u}(0) = \bar{w}(0) = u_0$. Set $z=\bar u-\bar w$. Then
\[
    \partial_t z
    -
    (-\Delta)^{\frac{\alpha}{2}}\partial_{x_1}z
    +
    \bar u\,\partial_{x_1}\bar u
    -
    \bar w\,\partial_{x_1}\bar w
    =0,
    \qquad z(0)=0.
\]
Taking the $L^2$ inner product with $z$ and using the skew-symmetry of the
dispersive term,
\[
    \frac12\frac{d}{dt}\|z\|^2
    =
    -\left(
    \bar u\,\partial_{x_1}\bar u
    -
    \bar w\,\partial_{x_1}\bar w,
    z
    \right).
\]
Since \(\bar u\,\partial_{x_1}\bar u -\bar w\,\partial_{x_1}\bar w =
 z\,\partial_{x_1}\bar u + \bar w\,\partial_{x_1}z\), we have
\[
    \frac12\frac{d}{dt}\|z\|^2 =-(\partial_{x_1}\bar u,z^2) - (\bar w\,\partial_{x_1}z,z) = -(\partial_{x_1}\bar u,z^2) + \frac12(\partial_{x_1}\bar w,z^2).
\]
Therefore
\[
    \frac{d}{dt}\|z\|^2
    \leq
    C\left(
    \|\partial_{x_1}\bar u\|_{L^\infty(\Omega)}
    +
    \|\partial_{x_1}\bar w\|_{L^\infty(\Omega)}
    \right)\|z\|^2 .
\]
Since $s>2$, the embedding
$H^s_{\mathrm{per}}(\Omega)\hookrightarrow W^{1,\infty}(\Omega)$ gives
\[
    \frac{d}{dt}\|z\|^2
    \leq C\|z\|^2 .
\]
Gr\"onwall's inequality and $z(0)=0$ imply $z\equiv 0$ on $[0,\bar T]$.
Thus the limit is unique. Since every subsequence has a further subsequence converging to the same limit, the whole sequence $u_N$ converges to $\bar u$ in $C([0,\bar T];L^2_{\mathrm{per}}(\Omega))$. This completes the proof.
\end{proof}

\begin{remark}\label{remk_extnsn}
The convergence result above is local in the high Sobolev topology. This
local statement is valid for the full range \(0<\alpha\leq2\), provided
\(s\geq 2+\alpha\). The role of this assumption is to control the transport
nonlinearity through \(
H^s_{\mathrm{per}}(\Omega)\hookrightarrow W^{1,\infty}(\Omega)\), and to ensure that \( (-\Delta)^{\alpha/2}\partial_{x_1}u \in L^2_{\mathrm{per}}(\Omega)\).

At the semi-discrete level, each \(u_N\) exists for all positive times because
the Galerkin system is finite dimensional and the conserved \(L^2\)-norm
prevents blow-up of the Fourier coefficient vector for each fixed \(N\).
Moreover, for \(\alpha>1\), the conserved Hamiltonian gives a useful global
energy-level control. Indeed, the periodic Gagliardo--Nirenberg inequality gives
\[
    \|v\|_{L^3(\Omega)}^3
    \leq
    C\|v\|_{H^{\alpha/2}_{\mathrm{per}}(\Omega)}^{2/\alpha}
    \|v\|^{3-2/\alpha}.
\]
Since the \(L^2\)-norm controls the zero Fourier mode, \(
    \|v\|_{H^{\alpha/2}_{\mathrm{per}}(\Omega)} \leq C\left(\|(-\Delta)^{\alpha/4}v\|+\|v\|\right)\).
Using the conservation of \(\|u_N(t)\|\) in \eqref{cons2}, we therefore obtain
\[
    \left|\int_\Omega u_N^3\,d\mathbf{x}\right|
    \leq
    C\left(1+\|(-\Delta)^{\alpha/4}u_N(t)\|^{2/\alpha}\right).
\]
If \(\alpha>1\), then \(2/\alpha<2\), and Young's inequality yields
\[
    \left|\int_\Omega u_N^3\,d\mathbf{x}\right|
    \leq
    \varepsilon \|(-\Delta)^{\alpha/4}u_N(t)\|^2+C_\varepsilon.
\]
Consequently,
\[
\begin{aligned}
    \mathcal H(u_N(t))
    = \frac12\|(-\Delta)^{\alpha/4}u_N(t)\|^2-\frac16\int_\Omega u_N^3\,d\mathbf{x} \geq
    \left(\frac12-\frac{\varepsilon}{6}\right)\|(-\Delta)^{\alpha/4}u_N(t)\|^2
    -C_{\varepsilon}.
\end{aligned}
\]
Choosing \(\varepsilon<3\) and using the conservation of
\(\mathcal H(u_N(t))\), we obtain
\[
    \|(-\Delta)^{\alpha/4}u_N(t)\|^2
    \leq
    C\left(\mathcal H(u_N(0)),\|u_N(0)\|\right),
    \qquad t\geq0.
\]
Thus, the conservation laws \eqref{cons3} give a global-in-time bound for \( \|u_N(t)\|_{H^{\alpha/2}_{\mathrm{per}}(\Omega)}\) uniformly at the energy level. This is important for stability, but it does
not control the higher norm \(\|u_N(t)\|_s\) with \(s\geq2+\alpha\), which is
the norm required in the compactness and optimal error analysis.

This Hamiltonian coercivity argument does not close when \(0<\alpha\leq1\):
for \(\alpha=1\), the cubic term has the same order as the quadratic
Hamiltonian part, and for \(0<\alpha<1\), it is superquadratic relative to
the Hamiltonian norm.  

The standard continuation criterion remains the following: if a strong solution satisfies
\[
    \sup_{0\leq t<T^\ast}\|u(t)\|_s<\infty,
\]
then it can be continued beyond \(T^\ast\). Hence the convergence result
extends to any interval \([0,T]\) on which an independent \(H^s\)-bound for
the exact solution is available.
\end{remark}

\begin{proof}[Proof of Theorem \ref{thm:exisuq}]
The theorem follows directly from Lemma \ref{conv_lemma}. The lemma constructs a Galerkin limit in \(
L^\infty(0,\bar T;H^s_{\mathrm{per}}(\Omega)) \cap W^{1,\infty}(0,\bar T;L^2_{\mathrm{per}}(\Omega))\), identifies it as a strong solution of \eqref{eqn:fZK}, proves uniqueness, and then concludes convergence of the whole Galerkin sequence in $C([0,\bar T];L^2_{\mathrm{per}}(\Omega))$.
\end{proof}

\section{Optimal Error Analysis}\label{sec4}

In this section, we develop an optimal error estimate for the FSG method \eqref{fsg:fZK} applied to the fZK equation \eqref{eqn:fZK}. The key idea, inspired by Deng's work \cite{deng2017optimal} on the one-dimensional Kawahara equation, is to introduce a modified projection operator adapted to the transport-dispersive operator appearing in the error equation. In contrast to a standard $L^2$ projection, this modified projection cancels the linear residual and allows us to recover the optimal spectral rate. We give the full proof in the present two-dimensional fractional setting.

We shall use the following standard Fourier projection estimates. If
$z\in H^r_{\mathrm{per}}(\Omega)$ and $0\leq \ell\leq r$, then
\begin{equation}\label{eq:std_proj_est}
    \|z-\Pi_Nz\|_\ell
    \leq C N^{\ell-r}|z|_r .
\end{equation}
Moreover, for $v_N\in\mathbb X_N$ and $\ell\geq 0$, the inverse estimate
\begin{equation}\label{eq:inverse_general}
    \|v_N\|_\ell\leq C N^\ell\|v_N\|
\end{equation}
holds. If $z$ is analytic and periodic, then there exist positive constants $C$ and $\sigma$, independent of $N$, such that
\begin{equation}\label{eq:std_proj_exp}
    \|z-\Pi_Nz\|\leq C e^{-\sigma N}\|z\|,
\end{equation}
see \cite[Chap. 13]{hesthaven2017numerical}.
We introduce
\[
    \mathcal L z:=Kz+u\partial_{x_1}z-(-\Delta)^{\alpha/2}\partial_{x_1} z,
\]
 where $K > 0$ is a constant that will be chosen sufficiently large to ensure coercivity. Since $(-\Delta)^{\alpha/2}\partial_{x_1}$ is skew-symmetric on the periodic domain, the $L^2$ adjoint of $\mathcal L$ is
\[
    \mathcal L^\ast z
    =
    Kz-\partial_{x_1}(uz)+(-\Delta)^{\alpha/2}\partial_{x_1} z .
\]
Let $u \in C^1(\Omega)$. For any $w \in H^r_{\mathrm{per}}(\Omega)$ with
$r \geq 1+\alpha$, we define the modified projection
$\widetilde{\Pi}_N w \in \mathbb{X}_N$ as follows:
\begin{equation}\label{eq:proj_def}
    \left(
    K(\widetilde{\Pi}_N w - w)
    +
    u\partial_{x_1}(\widetilde{\Pi}_N w - w)
    -
    (-\Delta)^{\alpha/2}\partial_{x_1}(\widetilde{\Pi}_N w - w),
    v
    \right)=0,
    \quad \forall v \in \mathbb{X}_N .
\end{equation}
Equivalently,
\[
    (\mathcal L(\widetilde{\Pi}_Nw-w),v)=0,
    \qquad \forall v\in\mathbb X_N .
\]

For \(0\leq\beta<1\), we also use the one-dimensional fractional derivative
in the distinguished direction \(x_1\), defined by
\[
    |\partial_{x_1}|^\beta f(\mathbf{x})
    =
    \sum_{\mathbf{k}\in\mathbb Z^2}
    |k_1|^\beta \widehat f(\mathbf{k})e^{i\mathbf{k}\cdot\mathbf{x}}.
\]
We use the interpolation inequality, which implies 
\begin{equation}\label{eq: interp_est_beta}
    \||\partial_{x_1}|^\beta f\|
    \leq
    \|f\|^{1-\beta}\|\partial_{x_1}f\|^\beta,
    \qquad 0\leq\beta\leq1,
\end{equation}
and the product estimate implies
\[
    \||\partial_{x_1}|^\beta(fg)\|
    \leq
    C\left(
    \|f\|_{W^{1,\infty}(\Omega)}\|g\|
    +
    \|f\|_{L^\infty(\Omega)}
    \||\partial_{x_1}|^\beta g\|
    \right),
    \qquad 0\leq\beta<1.
\]

\begin{theorem}\label{thm:mod_proj_well_defined}
Let \(0<\alpha\leq2\), \(u\in W^{1,\infty}(\Omega)\), and
\(w\in H^r_{\mathrm{per}}(\Omega)\) with \(r\geq\alpha+1\). There exists
\[
    K_0:=1+\frac12\|\partial_{x_1}u\|_{L^\infty(\Omega)}
\]
such that, for every \(K\geq K_0\), the variational problem
\eqref{eq:proj_def} has a unique solution
\(\widetilde{\Pi}_N w\in\mathbb X_N\).
\end{theorem}

\begin{proof}
Let $\phi = \widetilde{\Pi}_N w \in \mathbb{X}_N$ be the unknown. Equation
\eqref{eq:proj_def} is equivalent to finding $\phi \in \mathbb{X}_N$ such that
for all $v \in \mathbb{X}_N$,
\[
    (K\phi + u\partial_{x_1}\phi - (-\Delta)^{\alpha/2}\partial_{x_1} \phi, v)
    =
    (Kw + u\partial_{x_1}w - (-\Delta)^{\alpha/2}\partial_{x_1} w, v).
\]
Define the bilinear form $B: \mathbb{X}_N \times \mathbb{X}_N \to \mathbb{R}$
by
\[
    B(\phi,\psi)
    =
    (K\phi + u\partial_{x_1}\phi - (-\Delta)^{\alpha/2}\partial_{x_1}\phi,\psi),
\]
and the bounded linear functional $F:\mathbb X_N\to\mathbb R$ by
\[
    F(\psi)
    =
    (Kw + u\partial_{x_1}w-(-\Delta)^{\alpha/2}\partial_{x_1} w,\psi).
\]
The functional $F$ is well-defined because $r\geq \alpha+1$. We first prove coercivity. For any $\phi\in\mathbb X_N$, periodicity gives
\[
    (u\partial_{x_1}\phi,\phi)
    =
    -\frac12(\partial_{x_1}u,\phi^2),
\]
and Lemma \ref{lem:frac_properties} gives \( ((-\Delta)^{\alpha/2}\partial_{x_1}\phi,\phi)=0\). Therefore,
\[
    B(\phi,\phi)
    =
    K\|\phi\|^2
    -
    \frac12(\partial_{x_1}u,\phi^2)
    \geq
    \left(
    K-\frac12\|\partial_{x_1}u\|_{L^\infty(\Omega)}
    \right)\|\phi\|^2 .
\]
For $K\geq K_0$, this yields
\[
    B(\phi,\phi)\geq \|\phi\|^2 .
\]
The bilinear form is bounded on the finite-dimensional space $\mathbb X_N$.
Indeed,
\[
\begin{aligned}
    |B(\phi,\psi)|
    &\leq
    K\|\phi\|\|\psi\|
    +
    \|u\|_{L^\infty(\Omega)}
    \|\partial_{x_1}\phi\|\|\psi\|
    +
    \|(-\Delta)^{\alpha/2}\partial_{x_1}\phi\|\|\psi\|  \\
    &\leq
    \left(
    K+C\|u\|_{L^\infty(\Omega)}N+CN^{\alpha+1}
    \right)\|\phi\|\|\psi\|,
\end{aligned}
\]
where we used the inverse inequalities for trigonometric polynomials.
Thus, $B$ is coercive and bounded on $\mathbb X_N$. Since $\mathbb X_N$ is a finite-dimensional Hilbert space, the Lax--Milgram theorem gives a unique $\phi\in\mathbb X_N$ satisfying $B(\phi,\psi)=F(\psi)$ for all
$\psi\in\mathbb X_N$. Hence, $\widetilde{\Pi}_Nw=\phi$ is well-defined.
\end{proof}

\begin{theorem}\label{thm:proj_error}
Let \(0<\alpha\leq2\), \(r\geq\alpha+1\), and \(0\leq\ell\leq r\).
Let \(u\in W^{1,\infty}(\Omega)\) and
\(w\in H^r_{\mathrm{per}}(\Omega)\). Assume that \(K\) is chosen as in
Theorem \ref{thm:mod_proj_well_defined}, and let
\(\widetilde{\Pi}_N\) be defined by \eqref{eq:proj_def}. Then there exists a
constant \(C>0\), independent of \(N\), such that
\begin{equation}\label{proj_erro}
    \|w-\widetilde{\Pi}_Nw\|_\ell
    \leq
    C N^{\ell-r}|w|_r .
\end{equation}
If, in addition, \(w\) is real-analytic on \(\Omega\) with an analytic
extension to a complex strip of width \(\sigma_0>0\), then there exist
positive constants \(C\) and \(\sigma\), independent of \(N\), such
that
\begin{equation}\label{proj_erro2}
    \|w-\widetilde{\Pi}_Nw\|
    \leq
    Ce^{-\sigma N}.
\end{equation}
\end{theorem}

\begin{proof}
Let \( \rho_N:=w-\Pi_Nw\) and \(\theta_N:=\widetilde{\Pi}_Nw-\Pi_Nw\).
Then \( w-\widetilde{\Pi}_Nw=\rho_N-\theta_N\).
By \eqref{eq:std_proj_est}, we have
\begin{equation}\label{eq:rho_bound}
    \|\rho_N\|_\ell
    \leq
    C N^{\ell-r}|w|_r,
    \qquad 0\leq \ell\leq r .
\end{equation}
It remains to estimate $\theta_N$. Since
\[
    (\mathcal L(\widetilde{\Pi}_Nw-w),v)=0,
    \qquad \forall v\in\mathbb X_N,
\]
we have
\begin{equation}\label{eq:theta_eq1}
    (\mathcal L\theta_N,v) = (\mathcal L\rho_N,v)  = (K\rho_N+u\partial_{x_1}\rho_N-(-\Delta)^{\alpha/2}\partial_{x_1}\rho_N,v)=
    (u\partial_{x_1}\rho_N,v),
    \qquad \forall v\in\mathbb X_N,
\end{equation}
where we have used the orthogonality \eqref{eq:proj_ortho} and the fact that \( (-\Delta)^{\alpha/2}\partial_{x_1} v\in\mathbb X_N\) for every \(v\in\mathbb X_N\).
We next prove an auxiliary adjoint estimate. Let $g_N\in\mathbb X_N$ and let
$\phi_N\in\mathbb X_N$ solve
\begin{equation}\label{eq:adjoint_problem}
    (\mathcal L^\ast\phi_N,v)=(g_N,v),
    \qquad \forall v\in\mathbb X_N .
\end{equation}
Let \( c_K:=K-\frac12\|\partial_{x_1}u\|_{L^\infty(\Omega)}\). Taking $v=\phi_N$ in \eqref{eq:adjoint_problem}, and using periodicity together with the skew-symmetry of $(-\Delta)^{\alpha/2}\partial_{x_1}$, we obtain
\[
\begin{aligned}
 (\mathcal L^\ast\phi_N,\phi_N) &= K\|\phi_N\|^2  - (\partial_{x_1}(u\phi_N),\phi_N) + ((-\Delta)^{\alpha/2}\partial_{x_1}\phi_N,\phi_N)  \\ &= K\|\phi_N\|^2 - \frac12(\partial_{x_1}u,|\phi_N|^2)  \\ &\geq c_K\|\phi_N\|^2 .
\end{aligned}
\]
Since $K\geq K_0$, we have $c_K\geq1$. Hence \( \|\phi_N\|^2 \leq |(g_N,\phi_N)| \leq \|g_N\|\|\phi_N\|\),
and therefore
\begin{equation}\label{eq:adjoint_L2}
    \|\phi_N\|\leq C\|g_N\|.
\end{equation}
We also need a bound for \(\partial_{x_1}\phi_N\). Since \(
    \mathcal L^\ast\phi_N = K\phi_N-\partial_{x_1}(u\phi_N) + (-\Delta)^{\alpha/2}\partial_{x_1}\phi_N\),
the discrete adjoint equation \eqref{eq:adjoint_problem} can be written in \(\mathbb X_N\) as
\[
    \left(K+(-\Delta)^{\alpha/2}\partial_{x_1}\right)\phi_N
    =
    g_N+\Pi_N\partial_{x_1}(u\phi_N).
\]
Applying \(\partial_{x_1} \left(K+(-\Delta)^{\alpha/2}\partial_{x_1}\right)^{-1}\) gives
\[
    \partial_{x_1}\phi_N
    =
    T_1g_N+T_2\Pi_N(u\phi_N),
\]
where \(T_1\) and \(T_2\) are Fourier multipliers with symbols
\[
    m_1(\mathbf{k})
    =
    \frac{i k_1}{K+i k_1|\mathbf{k}|^\alpha},
    \qquad
    m_2(\mathbf{k})
    =
    \frac{(ik_1)^2}{K+i k_1|\mathbf{k}|^\alpha}.
\]
The multiplier \(m_1\) is uniformly bounded for every \(0<\alpha\leq2\).
For \(m_2\), set
\[
    \beta:=(1-\alpha)_+ = \max\{1-\alpha,0\}.
\]
Then, for \(k_1\neq0\),
\[
    |m_2(\mathbf{k})|
    \leq
    \frac{|k_1|^2}{|k_1||\mathbf{k}|^\alpha}
    =
    |k_1|\,|\mathbf{k}|^{-\alpha}
    \leq
    C |k_1|^\beta .
\]
Thus
\[
    \|T_2 f\|
    \leq
    C\||\partial_{x_1}|^\beta f\|,
    \qquad 0\leq\beta<1.
\]
Consequently,
\[
\begin{aligned}
    \|\partial_{x_1}\phi_N\|
    &\leq
    C\|g_N\|
    +
    C\||\partial_{x_1}|^\beta(u\phi_N)\|  \\
    &\leq
    C\|g_N\|
    +
    C\|u\|_{W^{1,\infty}(\Omega)}\|\phi_N\|
    +
    C\|u\|_{L^\infty(\Omega)}
    \||\partial_{x_1}|^\beta\phi_N\|.
\end{aligned}
\]
By interpolation estimate \eqref{eq: interp_est_beta}, \(  \||\partial_{x_1}|^\beta\phi_N\| \leq \|\phi_N\|^{1-\beta}
    \|\partial_{x_1}\phi_N\|^\beta \).
Using the \(L^2\)-estimate \eqref{eq:adjoint_L2}, we obtain
\[
    \|\partial_{x_1}\phi_N\|
    \leq
    C\|g_N\|
    +
    C\|g_N\|^{1-\beta}
    \|\partial_{x_1}\phi_N\|^\beta .
\]
Since \(\beta<1\), Young's inequality gives
\[
    C\|g_N\|^{1-\beta}
    \|\partial_{x_1}\phi_N\|^\beta
    \leq
    \frac12\|\partial_{x_1}\phi_N\|+C\|g_N\|.
\]
Therefore, \( \|\partial_{x_1}\phi_N\| \leq C\|g_N\|\). Combining this with the \(L^2\)-estimate \eqref{eq:adjoint_L2} gives
\begin{equation}\label{eq:adjoint_estimate}
    \|\phi_N\|+\|\partial_{x_1}\phi_N\|
    \leq
    C\|g_N\|,
\end{equation}
where \(C\) is independent of \(N\).
We now take $g_N=\theta_N$ in \eqref{eq:adjoint_problem}. Then
\[
\|\theta_N\|^2 =(\theta_N,\theta_N)= (\mathcal L\theta_N,\phi_N) = (u\partial_{x_1}\rho_N,\phi_N)
    = -(\rho_N,\partial_{x_1}(u\phi_N)).
\]
Hence, by \eqref{eq:adjoint_estimate},
\[
    \|\theta_N\|^2\leq\|\rho_N\| \left(
    \|\partial_{x_1}u\|_{L^\infty(\Omega)}\|\phi_N\|
+\|u\|_{L^\infty(\Omega)}\|\partial_{x_1}\phi_N\|
    \right)    \leq C\|\rho_N\|\|\theta_N\|.
\]
If $\theta_N=0$ there is nothing to prove. Otherwise, dividing by
$\|\theta_N\|$ gives
\begin{equation}\label{eq:theta_L2}
    \|\theta_N\|
    \leq
    C\|\rho_N\|
    \leq
    C N^{-r}|w|_r .
\end{equation}
Using the inverse estimate \eqref{eq:inverse_general}, we obtain for
\(0\leq \ell\leq r\),
\begin{equation}\label{eq:theta_l}
    \|\theta_N\|_\ell
    \leq
    C N^\ell\|\theta_N\|
    \leq
    C N^{\ell-r}|w|_r .
\end{equation}
Finally,
\[
    \|w-\widetilde{\Pi}_Nw\|_\ell
    \leq
    \|\rho_N\|_\ell+\|\theta_N\|_\ell
    \leq
    C N^{\ell-r}|w|_r,
\]
which proves \eqref{proj_erro}. If $w$ is analytic, then \eqref{eq:std_proj_exp} gives \(
    \|\rho_N\|\leq C e^{-\sigma N}\).
The same duality argument leading to \eqref{eq:theta_L2} gives
\[
    \|\theta_N\|\leq C\|\rho_N\|
    \leq C e^{-\sigma N}.
\]
Therefore,
\[
    \|w-\widetilde{\Pi}_Nw\|
    \leq
    \|\rho_N\|+\|\theta_N\|
    \leq
    C e^{-\sigma N},
\]
which proves \eqref{proj_erro2}.
\end{proof}

\begin{lemma}\label{lem:time_error}
Let \(0<\alpha\leq2\), \(r\geq\alpha+1\), and \(0\leq\ell\leq r\).
Assume that
\[
    u\in C^1([0,T];W^{1,\infty}(\Omega)),
    \qquad
    w\in C^1([0,T];H^r_{\mathrm{per}}(\Omega)).
\]
Assume that \(K\) is chosen uniformly in time so that
\begin{equation}\label{eq:K_choice}
    K\geq
    1+\frac12\sup_{0\leq t\leq T}
    \|\partial_{x_1}u(t)\|_{L^\infty(\Omega)} .
\end{equation}
Then the modified projection error satisfies
\begin{equation}\label{time_error}
    \|\partial_t(w-\widetilde{\Pi}_Nw)(t)\|_\ell
    \leq
    C N^{\ell-r}
    \left(
    |\partial_t w(t)|_r+|w(t)|_r
    \right),
    \qquad 0\leq t\leq T,
\end{equation}
where \(C\) is independent of \(N\).
\end{lemma}

\begin{proof}
For each fixed $t$, the projection $\widetilde{\Pi}_N$ is defined by
\eqref{eq:proj_def} with coefficient $u(t)$. Since the problem is finite dimensional and its coefficients are $C^1$ in time, the map
$t\mapsto \widetilde{\Pi}_Nw(t)$ is differentiable.
Define
\[
    \chi_N(t):=\partial_t\widetilde{\Pi}_Nw(t)-\widetilde{\Pi}_N(\partial_t w )(t)\in\mathbb X_N .
\]
Then
\[
    \partial_t(w-\widetilde{\Pi}_Nw)
    =
    (\partial_t w-\widetilde{\Pi}_N(\partial_t w ))-\chi_N .
\]
 By Theorem \ref{thm:proj_error},
\begin{equation}\label{eq:pt_projection}
    \|\partial_t w(t)-\widetilde{\Pi}_N(\partial_t w )(t)\|_\ell
    \leq
    C N^{\ell-r}|\partial_t w(t)|_r .
\end{equation}
It remains to estimate $\chi_N$. Differentiating the projection identity
\[
    (\mathcal L(t)(\widetilde{\Pi}_Nw-w),v)=0,
    \qquad \forall v\in\mathbb X_N,
\]
with respect to time gives
\[
    \left(
    \mathcal L(t)
    \left(
    \partial_t\widetilde{\Pi}_Nw-\partial_t w
    \right),
    v
    \right)
    +
    \left(
    (\partial_tu)\partial_{x_1}
    (\widetilde{\Pi}_Nw-w),
    v
    \right)
    =0,
    \qquad \forall v\in\mathbb X_N .
\]
On the other hand, the definition of $\widetilde{\Pi}_N(\partial_t w )(t)$ gives
\[
    \left(
    \mathcal L(t)(\widetilde{\Pi}_N(\partial_t w )-\partial_t w),v
    \right)=0,
    \qquad \forall v\in\mathbb X_N .
\]
Subtracting the two identities, we obtain
\begin{equation}\label{eq:chi_eq}
    (\mathcal L(t)\chi_N,v)
    =
    -
    \left(
    (\partial_tu)\partial_{x_1}
    (\widetilde{\Pi}_Nw-w),
    v
    \right),
    \qquad \forall v\in\mathbb X_N .
\end{equation}
We now use the adjoint argument from the proof of Theorem \ref{thm:proj_error}. Let $\phi_N\in\mathbb X_N$ solve
\[
    (\mathcal L(t)^\ast\phi_N,v)=(\chi_N,v),
    \qquad \forall v\in\mathbb X_N .
\]
The estimate \eqref{eq:adjoint_estimate}, applied at time $t$, gives
\[
    \|\phi_N\|+\|\partial_{x_1}\phi_N\|
    \leq C\|\chi_N\|.
\]
Taking $v=\phi_N$ in \eqref{eq:chi_eq}, we obtain
\[
    \|\chi_N\|^2= (\mathcal L(t)\chi_N,\phi_N) =-\left(
    (\partial_tu)\partial_{x_1}
    (\widetilde{\Pi}_Nw-w), \phi_N
    \right)= \left(\widetilde{\Pi}_Nw-w, \partial_{x_1}\left((\partial_tu)\phi_N\right) \right).
\]
Therefore,
\[
    \|\chi_N\|^2 \leq \|\widetilde{\Pi}_Nw-w\|
    \left( \|\partial_{x_1}\partial_tu\|_{L^\infty(\Omega)}\|\phi_N\|
    + \|\partial_tu\|_{L^\infty(\Omega)}
    \|\partial_{x_1}\phi_N\|
    \right)\leq C\|\widetilde{\Pi}_Nw-w\|\|\chi_N\|.
\]
Using Theorem \ref{thm:proj_error} with $\ell=0$, we get
\[
    \|\chi_N\|
    \leq
    C\|\widetilde{\Pi}_Nw-w\|
    \leq
    C N^{-r}|w(t)|_r .
\]
By the inverse estimate \eqref{eq:inverse_general},
\[
    \|\chi_N\|_\ell
    \leq
    C N^\ell\|\chi_N\|
    \leq
    C N^{\ell-r}|w(t)|_r .
\]
Combining this estimate with \eqref{eq:pt_projection}, we obtain
\[
    \|\partial_t(w-\widetilde{\Pi}_Nw)(t)\|_\ell \leq
    \|\partial_t w(t)-\widetilde{\Pi}_N(\partial_t w )(t)\|_\ell+\|\chi_N(t)\|_\ell  \leq C N^{\ell-r}
    \left( |\partial_t w(t)|_r+|w(t)|_r\right).
\]
This proves \eqref{time_error}.
\end{proof}

\begin{lemma}\label{lem:time_error_analytic}
Assume that the hypotheses of Lemma \ref{lem:time_error} hold. In addition,
assume that $u$, $w$, and $\partial_t w$ are uniformly real-analytic in the
spatial variables on $[0,T]$. Then there exist positive constants $C$ and
$\sigma$, independent of $N$, such that
\begin{equation}\label{time_error_analytic}
    \|\partial_t(w-\widetilde{\Pi}_Nw)(t)\|
    \leq
    C e^{-\sigma N},
    \qquad 0\leq t\leq T .
\end{equation}
\end{lemma}

\begin{proof}
The proof follows the proof of Lemma \ref{lem:time_error}, replacing the
algebraic projection estimate \eqref{proj_erro} by the exponential estimate
\eqref{proj_erro2}. More precisely, differentiating \eqref{eq:proj_def} in
time gives the same equation as in Lemma \ref{lem:time_error}, with an
additional coefficient term involving
$\partial_tu\,\partial_{x_1}(w-\widetilde{\Pi}_Nw)$. The duality argument used
in Theorem \ref{thm:proj_error} and Lemma \ref{lem:time_error} gives
\[
    \|\partial_t(w-\widetilde{\Pi}_Nw)(t)\|
    \leq
    C\left(
    \|\partial_t w-\widetilde{\Pi}_N(\partial_t w)\|
    +
    \|w-\widetilde{\Pi}_Nw\|
    \right).
\]
Since $w$ and $\partial_t w$ are uniformly analytic in space, both terms on
the right-hand side are bounded by $Ce^{-\sigma N}$ by \eqref{proj_erro2}.
This proves \eqref{time_error_analytic}.
\end{proof}

\begin{theorem}[Spectral convergence]\label{thm:spectral_accuracy}
Let \(0<\alpha\leq2\), and let \(r>2+\alpha\). Assume that the exact solution
of \eqref{eqn:fZK} satisfies \(  u\in C^1([0,T];H^r_{\mathrm{per}}(\Omega))\).
Let \(u_N\) be the semi-discrete approximation obtained by \eqref{fsg:fZK}.
Assume that the constant \(K\) satisfies \eqref{eq:K_choice}. Then there exists a constant \( C=C\left(T,\alpha,r,K,\Omega,
    \|u\|_{C^1([0,T];H^r_{\mathrm{per}}(\Omega))}\right)\),
independent of \(N\), such that
\begin{equation}\label{eq:L2_error_bound}
    \|u(t)-u_N(t)\|
    \leq
    C N^{-r},
    \qquad 0\leq t\leq T .
\end{equation}
If, in addition, \(u\) and \(\partial_tu\) are uniformly real-analytic in the
spatial variables on \([0,T]\), then there exist positive constants \(C\) and \(\sigma\), independent of \(N\), such that
\[
    \|u(t)-u_N(t)\|
    \leq
    Ce^{-\sigma N},
    \qquad 0\leq t\leq T .
\]
\end{theorem}

\begin{proof}
For notational convenience, set
\[
    \eta_N:=u-\widetilde{\Pi}_Nu,
    \qquad
    \mathcal E_N:=u_N-\widetilde{\Pi}_Nu .
\]
Then \( u_N-u=\mathcal E_N-\eta_N \).
By Theorem \ref{thm:proj_error} and Lemma \ref{lem:time_error}, we have
\begin{align}
    \label{eta_l2}
    \|\eta_N(t)\|
    &\leq C N^{-r}|u(t)|_r,\\
    \label{eta_time}
    \|\partial_t\eta_N(t)\|
    &\leq C N^{-r}\left(|\partial_tu(t)|_r+|u(t)|_r\right),
\end{align}
for $0\leq t\leq T$. Choose a number \(\mu\) such that \(
    2<\mu<r\). This is possible because \(r>2+\alpha>2\). By Theorem \ref{thm:proj_error},
\[
    \|\eta_N(t)\|_\mu
    \leq
    C N^{\mu-r}|u(t)|_r .
\]
Since \(\mu>2\), the two-dimensional Sobolev embedding gives \(
    H^\mu_{\mathrm{per}}(\Omega) \hookrightarrow W^{1,\infty}(\Omega)\).
Therefore,
\begin{equation}\label{tilde_u_w1inf}
\begin{aligned}
    \|\partial_{x_1}\widetilde{\Pi}_Nu(t)\|_{L^\infty(\Omega)}
    &\leq
    \|\partial_{x_1}u(t)\|_{L^\infty(\Omega)}
    +
    \|\partial_{x_1}(u-\widetilde{\Pi}_Nu)(t)\|_{L^\infty(\Omega)} \\
    &\leq
    C\|u(t)\|_r
    +
    C\|\eta_N(t)\|_\mu \\
    &\leq
    C\|u(t)\|_r .
\end{aligned}
\end{equation}
The constant is independent of \(N\), since \(N^{\mu-r}\leq1\). We now derive the error equation. Subtracting the exact equation from the semi-discrete equation and using the defining relation \eqref{eq:proj_def} of $\widetilde{\Pi}_N$ with $w=u(t)$, we obtain, for all $v\in\mathbb X_N$,
\begin{equation}\label{error_eqed}
\begin{aligned}
    &\left(
    \partial_t\mathcal E_N
    -
    (-\Delta)^{\frac{\alpha}{2}}\partial_{x_1}\mathcal E_N
    +
    u_N\partial_{x_1}u_N
    -
    \widetilde{\Pi}_Nu\,\partial_{x_1}(\widetilde{\Pi}_Nu),
    v
    \right)  \\
    &\qquad
    +(K\eta_N,v)
    -
    \left(
    \eta_N\,\partial_{x_1}(\widetilde{\Pi}_Nu),
    v
    \right)
    -
    (\partial_t\eta_N,v)
    =0 .
\end{aligned}
\end{equation}
Taking $v=\mathcal E_N$ in \eqref{error_eqed} and using the
skew-symmetry property from Lemma \ref{lem:frac_properties}, we get
\begin{equation}\label{energy_errored}
\begin{aligned}
    \frac12\frac{d}{dt}\|\mathcal E_N\|^2 &= \left(
    \widetilde{\Pi}_Nu\,\partial_{x_1}(\widetilde{\Pi}_Nu)
    - u_N\partial_{x_1}u_N, \mathcal E_N \right) \\  &\quad  -(K\eta_N,\mathcal E_N) + \left(
    \eta_N\,\partial_{x_1}(\widetilde{\Pi}_Nu), \mathcal E_N
    \right)  + (\partial_t\eta_N,\mathcal E_N).
\end{aligned}
\end{equation}
We first estimate the nonlinear difference. Since \( u_N=\widetilde{\Pi}_Nu+\mathcal E_N\), we have
\begin{align*}
    \left(
    \widetilde{\Pi}_Nu\,\partial_{x_1}(\widetilde{\Pi}_Nu)
    -u_N\partial_{x_1}u_N,\mathcal E_N\right) 
    = -\left( \mathcal E_N\,\partial_{x_1}(\widetilde{\Pi}_Nu), \mathcal E_N\right)-\left(\widetilde{\Pi}_Nu\,\partial_{x_1}\mathcal E_N,
    \mathcal E_N \right)-\left(\mathcal E_N\,\partial_{x_1}\mathcal E_N, \mathcal E_N\right).
\end{align*}
By periodicity,
\[
    \left( \mathcal E_N\,\partial_{x_1}\mathcal E_N,
    \mathcal E_N\right)
    =
    \frac13\int_\Omega \partial_{x_1}(\mathcal E_N^3)\,d\mathbf{x}
    =0,
\]
and
\[
    -\left(\widetilde{\Pi}_Nu\,\partial_{x_1}\mathcal E_N,
    \mathcal E_N
    \right)= \frac12
    \left( \partial_{x_1}(\widetilde{\Pi}_Nu),
    \mathcal E_N^2
    \right).
\]
Therefore,
\[
    \left(
    \widetilde{\Pi}_Nu\,\partial_{x_1}(\widetilde{\Pi}_Nu)
    -u_N\partial_{x_1}u_N,\mathcal E_N\right)
    =
    -\frac12\left(\partial_{x_1}(\widetilde{\Pi}_Nu),
    \mathcal E_N^2\right).
\]
Using \eqref{tilde_u_w1inf}, we obtain
\begin{equation}\label{nonlinear_error_estimate}
    \left|
    \left(\widetilde{\Pi}_Nu\,\partial_{x_1}(\widetilde{\Pi}_Nu)
    -u_N\partial_{x_1}u_N,
    \mathcal E_N\right) \right| \leq C\|\mathcal E_N\|^2 .
\end{equation}
Next, using \eqref{eta_l2}, \eqref{eta_time}, and \eqref{tilde_u_w1inf}, we estimate
\begin{align}
    \label{K_eta_est}
    |(K\eta_N,\mathcal E_N)|
    &\leq
    C_K N^{-r}|u(t)|_r\|\mathcal E_N\|
    \leq
    C N^{-2r}+C\|\mathcal E_N\|^2,\\
    \label{eta_ux_est}
    \left|
    \left(
    \eta_N\,\partial_{x_1}(\widetilde{\Pi}_Nu),
    \mathcal E_N
    \right)
    \right|
    &\leq
    \|\eta_N\|
    \|\partial_{x_1}(\widetilde{\Pi}_Nu)\|_{L^\infty(\Omega)}
    \|\mathcal E_N\|                                      \notag\\
    &\leq
    C N^{-r}|u(t)|_r\|u(t)\|_r\|\mathcal E_N\|
    \leq
    C N^{-2r}+C\|\mathcal E_N\|^2,\\
    \label{eta_t_est}
    |(\partial_t\eta_N,\mathcal E_N)|
    &\leq
    C N^{-r}\left(|\partial_tu(t)|_r+|u(t)|_r\right)
    \|\mathcal E_N\|                                      \notag\\
    &\leq
    C N^{-2r}+C\|\mathcal E_N\|^2 .
\end{align}
Combining \eqref{energy_errored}--\eqref{eta_t_est}, we obtain
\begin{equation}\label{error_gronwall_ineq}
    \frac{d}{dt}\|\mathcal E_N(t)\|^2
    \leq
    C\|\mathcal E_N(t)\|^2
    +
    C N^{-2r},
    \qquad 0\leq t\leq T .
\end{equation}
It remains to estimate the initial error. Since $u_N(0)=\Pi_Nu_0$,
\[
    \mathcal E_N(0)
    =
    \Pi_Nu_0-\widetilde{\Pi}_Nu_0 .
\]
Thus, by the standard projection estimate and Theorem \ref{thm:proj_error},
\begin{equation}\label{initial_error_est}
\begin{aligned}
    \|\mathcal E_N(0)\| \leq
    \|\Pi_Nu_0-u_0\|+\|u_0-\widetilde{\Pi}_Nu_0\|\leq
    C N^{-r}|u_0|_r .
\end{aligned}
\end{equation}
Applying Gr\"onwall's's inequality to \eqref{error_gronwall_ineq} and using
\eqref{initial_error_est}, we obtain
\[
    \|\mathcal E_N(t)\|
    \leq
    C N^{-r},
    \qquad 0\leq t\leq T .
\]
Finally,
\[
    \|u(t)-u_N(t)\| \leq \|u(t)-\widetilde{\Pi}_Nu(t)\|
    + \|\widetilde{\Pi}_Nu(t)-u_N(t)\|
    = \|\eta_N(t)\|+\|\mathcal E_N(t)\|.
\]
Using \eqref{eta_l2}, we conclude
\[
    \|u(t)-u_N(t)\| \leq
    C N^{-r}, \qquad 0\leq t\leq T .
\]
This proves \eqref{eq:L2_error_bound}.

Now assume that $u$ and $\partial_tu$ are uniformly real-analytic in the
spatial variables on $[0,T]$. Then \eqref{proj_erro2} and
Lemma \ref{lem:time_error_analytic} give
\[
    \|\eta_N(t)\|
    +
    \|\partial_t\eta_N(t)\|
    \leq
    C e^{-\sigma N},
    \qquad 0\leq t\leq T .
\]
The same argument as above gives
\[
    \frac{d}{dt}\|\mathcal E_N(t)\|^2
    \leq
    C\|\mathcal E_N(t)\|^2
    +
    C e^{-2\sigma N}.
\]
Moreover,
\[
    \|\mathcal E_N(0)\|
    \leq
    \|\Pi_Nu_0-u_0\|
    +
    \|u_0-\widetilde{\Pi}_Nu_0\|
    \leq
    C e^{-\sigma N}.
\]
Gr\"onwall's's inequality therefore yields
\[
    \|\mathcal E_N(t)\|
    \leq
    C e^{-\sigma N},
    \qquad 0\leq t\leq T .
\]
Together with \eqref{proj_erro2}, this proves
\[
    \|u(t)-u_N(t)\|
    \leq
    C e^{-\sigma N},
    \qquad 0\leq t\leq T .
\]
The proof is complete.
\end{proof}

\section{Time Discretization by an Integrating-Factor RK4 Scheme}
\label{sec5}

The fZK equation \eqref{eqn:fZK} contains the stiff dispersive term
$\partial_{x_1}(-\Delta)^{\alpha/2}u$. After Fourier discretization, this
term produces purely imaginary eigenvalues of size $\mathcal O(N^{1+\alpha})$.
A standard explicit Runge--Kutta method applied directly to the semi-discrete
system would therefore require a severe time-step restriction of the form
$\Delta t\,N^{1+\alpha}\ll 1$. The integrating-factor approach removes this
stiffness by integrating the linear dispersive part exactly and applying the
explicit RK4 method only to the transformed nonlinear system. This is a standard
strategy for stiff and highly oscillatory spectral discretizations; see
\cite{canuto2006spectral,CoxMatthews2002,HochbruckOstermann2010,KassamTrefethen2005,vaissmoradi2009error}.

Let \(M\in\mathbb N\), \(\Delta t=T/M\), and \(t_n=n\Delta t\),
\(0\leq n\leq M\). The Fourier spectral Galerkin formulation \eqref{ode-sys}
yields a system of ordinary differential equations
\begin{equation}
    \frac{dU}{dt}(t)=G(U(t))=LU(t)+\mathcal N(U(t)),
    \label{eq:ode_system}
\end{equation}
where \(U(t)\) denotes the vector of Fourier coefficients of \(u_N(t)\). The
linear matrix \(L\) is diagonal with entries
\[
    L_{\mathbf m}=i m_1|\mathbf m|^\alpha,
    \qquad |\mathbf m|_\infty\leq N,
\]
and the nonlinear part \(\mathcal N\) is given by
\[
    \mathcal N_{\mathbf m}(U)
    = -\frac12 i m_1
    \sum_{\substack{|\mathbf k|_\infty\leq N\\
                    |\mathbf m-\mathbf k|_\infty\leq N}}
    \widehat u_N(\mathbf k)\widehat u_N(\mathbf m-\mathbf k),
    \qquad |\mathbf m|_\infty\leq N .
\]
Throughout this section, the norm of a coefficient vector is identified with
the \(L^2(\Omega)\)-norm of the corresponding trigonometric polynomial.

We introduce the integrating-factor variable
\[
    V(t)=e^{-Lt}U(t).
\]
Since \(L\) is constant and diagonal,
\begin{equation}\label{transsys_eq}
    \frac{dV}{dt}
    =
    e^{-Lt}\mathcal N(e^{Lt}V(t))
    =:\mathcal F(t,V(t)).
\end{equation}
The matrices \(e^{Lt}\) and \(e^{-Lt}\) are unitary in all periodic Sobolev
norms, because the entries \(L_{\mathbf m}\) are purely imaginary. Given \(V^n\approx V(t_n)\), one step of the classical fourth-order
Runge--Kutta method applied to \eqref{transsys_eq} is
\begin{equation}
    V^{n+1}=V^n+\frac16(k_1+2k_2+2k_3+k_4),
    \label{eq:rk4_update}
\end{equation}
where
\begin{align*}
    k_1&=\Delta t\,\mathcal F(t_n,V^n),\\
    k_2&=\Delta t\,\mathcal F\left(t_n+\frac{\Delta t}{2},
    V^n+\frac{k_1}{2}\right),\\
    k_3&=\Delta t\,\mathcal F\left(t_n+\frac{\Delta t}{2},
    V^n+\frac{k_2}{2}\right),\\
    k_4&=\Delta t\,\mathcal F(t_n+\Delta t,V^n+k_3).
\end{align*}
The numerical solution in the original Fourier variables is recovered by
\[
    U^{n+1}=e^{Lt_{n+1}}V^{n+1}.
\]
Thus, in terms of Fourier coefficients,
\begin{equation}\label{fdscheme}
    \widehat u_N^{\,n+1}(\mathbf m)
    =
    e^{L_{\mathbf m}t_{n+1}}\widehat v^{\,n+1}(\mathbf m),
    \qquad |\mathbf m|_\infty\leq N .
\end{equation}

\subsection*{Linearized stability}

We first record the exact linear stability restriction for the transformed
linearized problem. Replace the nonlinear term by
\(\lambda\partial_{x_1}u\), with \(\lambda\in\mathbb R\). In Fourier variables
the semi-discrete system becomes
\[
    \frac{d\widehat u(\mathbf k)}{dt}
    =  L_{\mathbf k}\widehat u(\mathbf k) - i\lambda k_1\widehat u(\mathbf k).
\]
After applying the integrating factor, each Fourier mode satisfies
\[
    \frac{d\widehat v(\mathbf k)}{dt} = -i\lambda k_1\widehat v(\mathbf k).
\]
The RK4 method applied to this equation gives
\[
    \widehat v^{\,n+1}(\mathbf k)
    = R(-i\lambda k_1\Delta t)\widehat v^{\,n}(\mathbf k),
\]
where \(R(z)\) is the stability function of RK4:
\[
    R(z)=1+z+\frac{z^2}{2}+\frac{z^3}{6}+\frac{z^4}{24}.
\]
For \(z=iy\), \(y\in\mathbb R\),
\[
    R(iy)
    =
    \left(1-\frac{y^2}{2}+\frac{y^4}{24}\right)
    +
    i\left(y-\frac{y^3}{6}\right),
\]
and hence
\[
    |R(iy)|^2
    =
    \left(1-\frac{y^2}{2}+\frac{y^4}{24}\right)^2
    +
    \left(y-\frac{y^3}{6}\right)^2
    =
    1+\frac{y^6(y^2-8)}{576}.
\]
Therefore \(|R(iy)|\leq 1\) for \(|y|\leq 2\sqrt2\). Since
\(|k_1|\leq N\), the linearized stability condition is
\begin{equation}\label{timestep}
    \Delta t
    \leq
    \frac{2\sqrt2}{|\lambda|N}.
\end{equation}
This is a significantly weaker restriction compared with the original dispersive stiffness, and it is also independent of the fractional order \(\alpha\).

\begin{theorem}\label{thm:linear_IFRK4_stability}
For the linearized fZK equation with \(\lambda\in\mathbb R\), the integrating-factor RK4 scheme is linearly stable under the condition
\eqref{timestep}.
\end{theorem}

\begin{proof}
For each Fourier mode, the amplification factor is
\(R(-i\lambda k_1\Delta t)\). The identity
\[
    |R(iy)|^2=1+\frac{y^6(y^2-8)}{576}
\]
shows that \(|R(iy)|\leq1\) whenever \(|y|\leq2\sqrt2\). Since
\(|k_1|\leq N\), condition \eqref{timestep} guarantees stability for every
Fourier mode in \(\mathbb X_N\).
\end{proof}

\begin{remark}\label{rem:linear_stability_not_enough}
The estimate \eqref{timestep} is a linearized stability condition. For the
nonlinear equation, the effective transport speed depends on the computed
solution. Therefore, in practical computations, one uses a convective
restriction of the form
\[
    \Delta t\,N
    \max_{0\leq n\leq M}\|u_N^n\|_{L^\infty(\Omega)}
    \leq c_0,
\]
with a moderate constant \(c_0>0\). The fully nonlinear error estimate below does not rely only on the linear stability condition. It also uses the consistency of one RK4 step and a nonlinear stability estimate for the semi-discrete flow.
\end{remark}

Let \(S_N(t;t_*,Z)\) denote the exact solution at time \(t\) of the transformed system \eqref{transsys_eq}, starting from \(Z\in\mathbb X_N\) at time \(t_*\).
Thus
\[
    S_N(t_*;t_*,Z)=Z,
    \qquad
    \frac{d}{dt}S_N(t;t_*,Z)=\mathcal F(t,S_N(t;t_*,Z)).
\]
Let \(\Psi_N(t_*,h,Z)\) denote one RK4 step of length \(h\) applied to \eqref{transsys_eq}, starting from \(Z\) at time \(t_*\). Hence the fully discrete method can be written as
\[
    V^{n+1}=\Psi_N(t_n,\Delta t,V^n).
\]
We also define the RK4 stage values for a variable step length \(h\). Let \( Z_1(h)=Z,\) and define the RK4 increments
\begin{equation}\label{eq:Z1_-Z4}
    \begin{aligned}
    K_1(h)&=h\mathcal F(t_*, Z_1(h)),\\
     Z_2(h)&=Z+\frac12K_1(h),
    \qquad
    K_2(h)=h\mathcal F\left(t_*+\frac h2, Z_2(h)\right),\\
     Z_3(h)&=Z+\frac12K_2(h),
    \qquad
    K_3(h)=h\mathcal F\left(t_*+\frac h2, Z_3(h)\right),\\
     Z_4(h)&=Z+K_3(h),
    \qquad
    K_4(h)=h\mathcal F(t_*+h, Z_4(h)).
 \end{aligned}
\end{equation}
Thus
\[
    \Psi_N(t_*,h,Z) = Z+\frac16\Big[K_1(h)+2K_2(h)+2K_3(h)+K_4(h)\Big].
\]

The local truncation error is obtained by starting one RK4 step from the exact semi-discrete value. Therefore, if \(V_N(t)=e^{-Lt}U_N(t)\), we define
\begin{equation}\label{eq:local_trunc}
    \tau_{n+1}
    :=
    S_N(t_{n+1};t_n,V_N(t_n))
    -
    \Psi_N(t_n,\Delta t,V_N(t_n)).
\end{equation}

\subsection*{Local truncation error}

The next result proves the fourth-order local consistency of the integrating-factor RK4 method \eqref{fdscheme}, with a constant that is independent of $N$.

\begin{remark}[Regularity required for the local temporal error]
\label{rem:regularity_count_ifrk4}
The Sobolev regularity required in the following theorem is only used for the fully discrete temporal error estimate. It is not needed for the semi-discrete compactness argument or for the optimal spatial projection estimate.

The Fourier multiplier \(L\) has a differential order \(1+\alpha\), while the nonlinear term
\[
    \mathcal N(Y)=-\frac12\Pi_N\partial_{x_1}(Y^2)
\]
loses one spatial derivative. In the transformed equation
\[
    V_t=\mathcal F(t,V)=e^{-Lt}\mathcal N(e^{Lt}V),
\]
each explicit time differentiation of \(\mathcal F\) introduces one factor of \(L\), and hence costs \(1+\alpha\) spatial derivatives. Since the fourth-order Runge--Kutta local error is of size \(\mathcal O(h^5)\), the relevant elementary differentials involve derivatives of the vector field up to order four. Thus, the highest spatial order that must be controlled is \(4(1+\alpha)+1\), where the additional \(1\) comes from the derivative \(\partial_{x_1}(Y^2)\). In two dimensions, we also need more than one additional derivative for the embedding
\[
    H^\sigma_{\mathrm{per}}(\Omega)\hookrightarrow L^\infty(\Omega),
    \qquad \sigma>1.
\]
Therefore the RK4 stage polynomials must be controlled in
\(H^{q_0}_{\mathrm{per}}(\Omega)\) with
\[
    q_0>4(1+\alpha)+2.
\]
Since the stage estimates are derived from the starting value and the nonlinear map \(\mathcal N\) loses one derivative at each stage, we take three extra derivatives and assume
\[
    q=q_0+3>4(1+\alpha)+5.
\]
This condition is a convenient sufficient condition, not an optimal sharp regularity threshold. Equivalently, one may replace it by the direct assumption that the transformed semi-discrete solution has uniformly bounded time derivatives up to order five and that the RK4 stage values remain in a bounded high Sobolev ball. For analytic solutions, these requirements are automatically
satisfied.
\end{remark}

\begin{theorem}\label{thm:local_truncation}
Let \(0<\alpha\leq2\), and let \(q>4(1+\alpha)+5\). Fix \(t_*\in[0,T]\), \(Z\in\mathbb X_N\),
\(0<h\leq T-t_*\), and let \(Y_N(t)\) be such that
\[
    Y_N(t)=e^{Lt}S_N(t;t_*,Z),
\]
and assume that \(  \sup_{t_*\leq t\leq t_*+h}\|Y_N(t)\|_q\leq R\) for some \(R>0\) independent of \(N\). Then there exists
\(h_0=h_0(R,q,\alpha,\Omega)>0\) such that, for \(0<h\leq h_0\),
\begin{equation}\label{eq:local_error_general}
    \left\|
    S_N(t_*+h;t_*,Z)-\Psi_N(t_*,h,Z)
    \right\|
    \leq
    C_R h^5,
\end{equation}
where \(C_R>0\) is independent of \(N\) and \(h\). In particular, taking
\(t_*=t_n\), \(h=\Delta t\), and \(Z=V_N(t_n)\), we obtain
\begin{equation}\label{eq:local_error_order5}
    \|\tau_{n+1}\|
    \leq
    C_R\Delta t^5 .
\end{equation}
\end{theorem}

\begin{proof}
We first record two elementary estimates for the nonlinear term. If
\(Y\in H^{\sigma+1}_{\mathrm{per}}(\Omega)\) and \(\sigma>1\), then
\[
    \|\mathcal N(Y)\|_\sigma
    =
    \left\|
    \frac12\Pi_N\partial_{x_1}(Y^2)
    \right\|_\sigma
    \leq
    C\|Y^2\|_{\sigma+1}
    \leq
    C\|Y\|_{\sigma+1}^2,
\]
where we used the boundedness of \(\Pi_N\) and the Sobolev embedding in two dimensions. The constant \(C\) is independent of \(N\). Let \(q_0:=q-3\). Then \(q_0>4(1+\alpha)+2\). We show that the RK4 internal stage polynomials are uniformly bounded in \(H^{q_0}_{\mathrm{per}}(\Omega)\). Let \( Z_1, Z_2, Z_3, Z_4\) be the transformed RK4 stage values defined above in \eqref{eq:Z1_-Z4}, and set
\[
    Y_j=e^{L(t_*+c_jh)}Z_j, \quad\text{ for }\quad j=1,2,3,4, \qquad c_1=0,\quad c_2=c_3=\frac12,\quad c_4=1.
\]
Thus \(Y_1=Y_N(t_*)\). Using the definitions of the stages and the unitarity of
\(e^{Lt}\), we have
\[
    \|Y_2\|_\sigma
    \leq
    \|Y_1\|_\sigma
    +
    \frac h2\|\mathcal N(Y_1)\|_\sigma
    \leq
    \|Y_1\|_\sigma
    +
    Ch\|Y_1\|_{\sigma+1}^2,
\]
for \(0\leq\sigma\leq q_0+2\). Similarly,
\[
    \|Y_3\|_\sigma
    \leq
    \|Y_1\|_\sigma
    +
    Ch\|Y_2\|_{\sigma+1}^2,
    \qquad
    \|Y_4\|_\sigma
    \leq
    \|Y_1\|_\sigma
    +
    Ch\|Y_3\|_{\sigma+1}^2 .
\]
Since \(q=q_0+3\) and \(\|Y_1\|_q\leq R\), these estimates, applied successively with \(\sigma=q_0+2,q_0+1,q_0\), imply that
\[
    \max_{1\leq j\leq4}\|Y_j\|_{q_0}\leq C_R
\]
for \(0<h\leq h_0\), with \(h_0\) depending only on
\(R,q,\alpha,\Omega\), and not on \(N\).

Next we prove the uniform bounds needed for the Taylor remainder. Since
\[
    \mathcal F(t,Z)=e^{-Lt}\mathcal N(e^{Lt}Z),
\]
every explicit \(t\)-derivative of \(\mathcal F\) is obtained by differentiating
the factors \(e^{-Lt}\), \(e^{Lt}\), and the quadratic map
\(\mathcal N\). Thus, for \(0\leq a\leq4\) and \(0\leq b\leq2\), each term in
\(\partial_t^aD_Z^b\mathcal F(t,Z)\) is a finite linear combination of
expressions of the form
\begin{equation}\label{eq:auxT_frechet}
T= e^{-Lt}L^{\nu_0}\Pi_N\partial_{x_1} \left[ (L^{\nu_1}Y^{(1)})(L^{\nu_2}Y^{(2)}) \right],
\qquad \nu_0+\nu_1+\nu_2\le a\le4,
\end{equation}
where \(Y^{(1)}\) and \(Y^{(2)}\) are physical polynomials associated with
the arguments of the Fr\'echet derivative, and \(D_Z^b\mathcal F(t,Z)\) denotes the \(b\)-th Fr\'echet derivative of \(\mathcal F\) with respect to its second variable \(Z\). Since \(L\) is a Fourier multiplier of differential order \(1+\alpha\), the total number of spatial derivatives falling on the product is at most
\[
    a(1+\alpha)+1 \leq 4(1+\alpha)+1 < q_0-1 .
\]
Since \(e^{-Lt}\) is unitary in all Sobolev norms and \(\Pi_N\) is
uniformly bounded, from \eqref{eq:auxT_frechet} we have
\[
\|T\| \le C \left\| \Lambda^{\nu_0(1+\alpha)+1} \left[ (L^{\nu_1}Y^{(1)})(L^{\nu_2}Y^{(2)})\right] \right\|.
\]
Applying the product estimate \eqref{eq:KP_product} with
\(s=\nu_0(1+\alpha)+1\), we obtain
\[
\|T\| \le C
\|L^{\nu_1}Y^{(1)}\|_{L^\infty}
\|L^{\nu_2}Y^{(2)}\|_{\nu_0(1+\alpha)+1} + C \|L^{\nu_2}Y^{(2)}\|_{L^\infty}
\|L^{\nu_1}Y^{(1)}\|_{\nu_0(1+\alpha)+1}.
\]
Since \((\nu_0+\nu_i)(1+\alpha)+1\le a(1+\alpha)+1\le4(1+\alpha)+1<q_0-1\), we have
\[
\|L^{\nu_i}Y\|_{\nu_0(1+\alpha)+1}\le C\|Y\|_{q_0}.
\]
Also, because \(q_0-\nu_i (1+\alpha)\ge q_0-4(1+\alpha)>2\), the embedding
\(H^{q_0-\nu_i (1+\alpha)}_{\mathrm{per}}(\Omega)\hookrightarrow L^\infty(\Omega)\)
gives
\[
\|L^{\nu_i}Y\|_{L^\infty}\le C\|Y\|_{q_0}.
\]
Consequently,
\[
\|T\|
\le
C
\|Y^{(1)}\|_{q_0}
\|Y^{(2)}\|_{q_0},
\]
with \(C\) independent of \(N\).
For \(b=0\), both factors \(Y^{(1)}\) and \(Y^{(2)}\) are the physical
base polynomial \(Y=e^{Lt}Z\), and the assumed \(H^{q_0}\)-bound gives
\[
\|\partial_t^a\mathcal F(t,Z)\|\le C_R.
\]
For \(b=1,2\), let \(W_1,\ldots,W_b\in \mathbb X_N\) be arbitrary
direction vectors in the transformed variable. For \(b=1\), one factor is the base polynomial \(Y=e^{Lt}Z\), while the
other is \(e^{Lt}W_1\). Thus
\[
\|\partial_t^aD_Z\mathcal F(t,Z)W_1\|
\le
C_R\|e^{Lt}W_1\|_{q_0}.
\]
For \(b=2\), the two factors are \(e^{Lt}W_1\) and \(e^{Lt}W_2\), and
therefore
\[
\|\partial_t^aD_Z^2\mathcal F(t,Z)[W_1,W_2]\|
\le
C
\|e^{Lt}W_1\|_{q_0}
\|e^{Lt}W_2\|_{q_0}.
\]
Combining these three cases gives
\[
    \left\|
    \partial_t^aD_Z^b\mathcal F(t,Z)
    [W_1,\ldots,W_b]
    \right\|
    \leq
    C_R\prod_{j=1}^b\|e^{Lt}W_j\|_{q_0},
\]
for \(0\le a\le4\) and \(0\le b\le2\), with the usual convention that
the empty product is \(1\). Since \(\mathcal N\) is quadratic,
\(D_Z^b\mathcal F=0\) for \(b\ge3\). 
 In particular, the exact transformed solution \(S_N(t;t_*,Z)\) and the RK4 stage expressions have all derivatives required below bounded in \(L^2(\Omega)\), uniformly in \(N\).

Define the one-step error as a function of the step length:
\[
    \mathcal R_N(h) := S_N(t_*+h;t_*,Z)-\Psi_N(t_*,h,Z).
\]
The classical fourth-order Runge--Kutta method satisfies all order conditions up to order four for non-autonomous systems. Equivalently, one applies RK4 to the autonomous augmented system
\[
    \frac{d}{dt}
    \begin{pmatrix}
        \theta\\ V
    \end{pmatrix}
    =
    \begin{pmatrix}
        1\\ \mathcal F(\theta,V)
    \end{pmatrix}.
\]
Therefore
\[
    \mathcal R_N(0)=0,
    \qquad
    \frac{d^j}{dh^j}\mathcal R_N(0)=0,
    \qquad j=1,2,3,4.
\]
The fifth derivative of the one-step error involves the fifth time derivative of the exact transformed solution and the derivatives of the RK4 stage expressions. Since RK4 has order four, the corresponding elementary differentials contain derivatives of the vector field only up to order four. The preceding bounds therefore control all terms appearing in $d^5\mathcal R_N/d\xi^5$. The derivative bounds established above imply
\[
    \sup_{0\leq \xi\leq h}
    \left\|
    \frac{d^5}{d\xi^5}\mathcal R_N(\xi)
    \right\|
    \leq C_R .
\]
Taylor's formula with integral remainder gives
\[
    \mathcal R_N(h)
    =
    \frac{h^5}{4!}
    \int_0^1
    (1-\theta)^4
    \frac{d^5}{d\xi^5}
    \mathcal R_N(\theta h)\,d\theta .
\]
Hence
\[
    \|\mathcal R_N(h)\|\leq C_Rh^5.
\]
This proves \eqref{eq:local_error_general}, and
\eqref{eq:local_error_order5} follows by choosing
\(t_*=t_n\), \(h=\Delta t\), and \(Z=V_N(t_n)\).
\end{proof}

The local estimate above controls only a single time step. To accumulate the local errors over all time steps, we need a stability estimate for the exact semi-discrete flow. The following lemma gives this estimate in \(L^2\), using the skew-symmetry of the fractional dispersive operator.

\begin{lemma}\label{lem:semidiscrete_flow_stability}
Let \(p_N,q_N\in C([t_*,t_*+h];\mathbb X_N)\) be two exact solutions of the
semi-discrete Galerkin equation \eqref{fsg:fZK}. Assume that
\[
    \sup_{t_*\leq t\leq t_*+h}
    \left(
    \|\partial_{x_1}p_N(t)\|_{L^\infty(\Omega)}
    +
    \|\partial_{x_1}q_N(t)\|_{L^\infty(\Omega)}
    \right)
    \leq R .
\]
Then
\begin{equation}\label{eq:semidiscrete_flow_stability}
    \|p_N(t)-q_N(t)\|
    \leq
    e^{CR(t-t_*)}
    \|p_N(t_*)-q_N(t_*)\|,
    \qquad t_*\leq t\leq t_*+h,
\end{equation}
where \(C>0\) is independent of \(N\).
\end{lemma}

\begin{proof}
Set \(z_N=p_N-q_N\). Subtracting the two semi-discrete equations gives
\[
    \partial_tz_N
    -
    (-\Delta)^{\alpha/2}\partial_{x_1}z_N
    +
    p_N\partial_{x_1}p_N
    -
    q_N\partial_{x_1}q_N
    =0
\]
in the Galerkin sense. Taking the \(L^2\)-inner product with \(z_N\), the
fractional dispersive term vanishes by Lemma \ref{lem:frac_properties}. Since
\[ p_N\partial_{x_1}p_N-q_N\partial_{x_1}q_N=z_N\partial_{x_1}p_N+q_N\partial_{x_1}z_N,
\]
we obtain
\[
    \frac12\frac{d}{dt}\|z_N\|^2=
    -\left(z_N\partial_{x_1}p_N,z_N\right)
    -\left(q_N\partial_{x_1}z_N,z_N\right) =
    -\left(\partial_{x_1}p_N,z_N^2\right)
    +\frac12\left(\partial_{x_1}q_N,z_N^2\right).
\]
Thus
\[
\frac{d}{dt}\|z_N\|^2 \leq   C \left(
\|\partial_{x_1}p_N\|_{L^\infty(\Omega)}+\|\partial_{x_1}q_N\|_{L^\infty(\Omega)}\right)\|z_N\|^2
\leq CR\|z_N\|^2.
\]
Gr\"onwall's inequality gives \eqref{eq:semidiscrete_flow_stability}.
\end{proof}

\subsection*{Temporal error of the nonlinear IFRK4 scheme}

We now combine the local truncation error with the semi-discrete flow stability.
The assumption below is the standard nonlinear stability requirement for an
explicit time integrator applied to a derivative nonlinearity: the exact
semi-discrete solution and the local semi-discrete flows issued from the
computed values must remain in a fixed high Sobolev ball on the time interval
under consideration.

\begin{theorem}\label{thm:temporal_error_ifrk4}
Let \(0<\alpha\leq2\), and let \(q>4(1+\alpha)+5\). Let \(u_N(t)\) be the exact semi-discrete solution of \eqref{fsg:fZK}, and let \(u_N^n\) be the
fully discrete solution generated by \eqref{eq:rk4_update}--\eqref{fdscheme},
with \(u_N^0=u_N(0)=\Pi_Nu_0\).
Assume that there exists a constant \(R>0\), independent of \(N\) and
\(\Delta t\), such that
\[
    \sup_{0\leq t\leq T}\|u_N(t)\|_q\leq R.
\]
Moreover, for each \(n\), let \(w_N^n(t)\) be the exact semi-discrete solution on \([t_n,t_{n+1}]\) with initial value \( w_N^n(t_n)=u_N^n\). Assume that
\[
    \sup_{0\leq n\leq M-1} \sup_{t_n\leq t\leq t_{n+1}} \|w_N^n(t)\|_q \leq R.
\]
Then, for \(\Delta t\leq h_0(R,q,\alpha,\Omega)\), there exists a constant \(C>0\), independent of \(N\) and \(\Delta t\), such that
\begin{equation}\label{eq:temporal_error_order4}
    \max_{0\leq n\leq M}
    \|u_N(t_n)-u_N^n\|
    \leq
    C\Delta t^4 .
\end{equation}
\end{theorem}

\begin{proof}
Let \( V_N(t)=e^{-Lt}U_N(t)\) and \(V^n=e^{-Lt_n}U^n\).
Since \(e^{Lt}\) is unitary in \(L^2(\Omega)\),
\[
    \|V_N(t_n)-V^n\|
    =
    \|u_N(t_n)-u_N^n\|.
\]
Define \( E^n:=V_N(t_n)-V^n \). The exact transformed solution satisfies
\[
    V_N(t_{n+1})
    =
    S_N(t_{n+1};t_n,V_N(t_n)),
\]
whereas the numerical method gives
\[
    V^{n+1}
    =
    \Psi_N(t_n,\Delta t,V^n).
\]
Therefore
\[
\begin{aligned}
    E^{n+1}
    &=S_N(t_{n+1};t_n,V_N(t_n))-\Psi_N(t_n,\Delta t,V^n) \\&=
    \left[ S_N(t_{n+1};t_n,V_N(t_n))-S_N(t_{n+1};t_n,V^n)
    \right]+
    \left[S_N(t_{n+1};t_n,V^n) -\Psi_N(t_n,\Delta t,V^n)
    \right].
\end{aligned}
\]
The first bracket is controlled by Lemma \ref{lem:semidiscrete_flow_stability}.
Indeed, the corresponding physical solutions are the exact semi-discrete flows
starting from \(u_N(t_n)\) and \(u_N^n\), respectively. Since \(q>2\), the Sobolev embedding \(H^q_{\mathrm{per}}(\Omega)\hookrightarrow W^{1,\infty}(\Omega)\)
and the assumed \(H^q\)-bounds imply
\[
    \left\| S_N(t_{n+1};t_n,V_N(t_n))- S_N(t_{n+1};t_n,V^n)
    \right\| \leq e^{C\Delta t}\|E^n\|.
\]
The second bracket is the one-step local defect starting from the numerical value \(V^n\). Since the exact semi-discrete flow \(w_N^n(t)\) issued from \(u_N^n\) is bounded by \(R\) in \(H^q\), Theorem \ref{thm:local_truncation}
gives
\[
    \left\|
    S_N(t_{n+1};t_n,V^n)
    -
    \Psi_N(t_n,\Delta t,V^n)
    \right\|
    \leq
    C\Delta t^5 .
\]
Consequently,
\[
    \|E^{n+1}\|
    \leq
    e^{C\Delta t}\|E^n\|
    +
    C\Delta t^5 .
\]
Since \(E^0=0\), the discrete Gr\"onwall's inequality gives
\[
    \|E^n\|
    \leq
    C\Delta t^4,
    \qquad 0\leq n\leq M.
\]
Using again the unitarity of \(e^{Lt_n}\), we obtain
\[
    \|u_N(t_n)-u_N^n\|
    =
    \|V_N(t_n)-V^n\|
    \leq
    C\Delta t^4 .
\]
This proves \eqref{eq:temporal_error_order4}.
\end{proof}

\begin{remark}\label{rem:temporal_assumptions}
The theorem separates the two roles of the time step. The integrating factor
removes the stiff dispersive restriction \(\Delta t\,N^{1+\alpha}\ll1\).
The remaining restriction is nonlinear and convective in nature; in practice
one chooses
\[
    \Delta t\,N\max_n\|u_N^n\|_{L^\infty(\Omega)}
    \leq c_0
\]
so that the numerical values and the RK4 stage values remain in the same
regularity regime. This is consistent with the linearized estimate
\eqref{timestep}, but the nonlinear convergence proof requires the high-norm
boundedness stated in Theorem \ref{thm:temporal_error_ifrk4}.
\end{remark}

\subsection*{Fully discrete spatial--temporal error}

We now combine the optimal spatial estimate from Section \ref{sec4} with the
fourth-order temporal estimate above.

\begin{theorem}\label{thm:fully_discrete_error}
Let \(0<\alpha\leq2\), \(r\geq\alpha+2\), and assume that the exact solution
\(u\) satisfies the hypotheses of Theorem \ref{thm:spectral_accuracy} on
\([0,T]\). Let \(u_N^n\) be generated by the integrating-factor RK4 scheme
\eqref{eq:rk4_update}--\eqref{fdscheme}. Assume, in addition, that the
hypotheses of Theorem \ref{thm:temporal_error_ifrk4} hold for the
semi-discrete and fully discrete trajectories. Then
\begin{equation}\label{eq:fully_discrete_error}
    \|u(t_n)-u_N^n\|
    \leq
    C\left(N^{-r}+\Delta t^4\right),
    \qquad 0\leq n\leq M,
\end{equation}
where \(C\) is independent of \(N\) and \(\Delta t\).

If \(u\) and \(\partial_tu\) are uniformly real-analytic in the spatial
variables on \([0,T]\), then there exist positive constants \(C\) and
\(\sigma\), independent of \(N\) and \(\Delta t\), such that
\begin{equation}\label{eq:fully_discrete_exp}
    \|u(t_n)-u_N^n\|
    \leq
    C\left(e^{-\sigma N}+\Delta t^4\right),
    \qquad 0\leq n\leq M.
\end{equation}
\end{theorem}

\begin{proof}
By the triangle inequality,
\[
    \|u(t_n)-u_N^n\|
    \leq
    \|u(t_n)-u_N(t_n)\|
    +
    \|u_N(t_n)-u_N^n\|.
\]
The first term is bounded by Theorem \ref{thm:spectral_accuracy}, and the
second term is bounded by Theorem \ref{thm:temporal_error_ifrk4}. This proves
\eqref{eq:fully_discrete_error}. The analytic estimate
\eqref{eq:fully_discrete_exp} follows in the same way from the exponential
spatial estimate in Theorem \ref{thm:spectral_accuracy}.
\end{proof}



\section{Numerical Experiments}
\label{sec6}

This section presents numerical validation of the FSG method for the fZK equation \eqref{eqn:fZK}. The computational methodology combines the spatial discretization from Section \ref{sec3} with the temporal scheme from Section \ref{sec5}, providing comprehensive verification of theoretical results and demonstrating the capabilities of scheme \eqref{fsg:fZK} across various fractional orders.

Although the analysis above is written for $\Omega=[-\pi,\pi]^2$, the same Fourier--Galerkin formulation applies to any rectangular periodic box after the usual affine rescaling of the Fourier frequencies. In the experiments below, large periodic boxes are used when localized solitary waves are tested. These computations should therefore be interpreted as large-period approximations of whole-space solitary waves, and the reported error contains a small periodization effect in addition to the spectral and temporal discretization errors. Unless stated otherwise, the nonlinear term is evaluated by the exact Galerkin convolution. 

For temporal discretization, the IFRK4 method \eqref{fdscheme} is implemented. The conservation laws proved in Theorem \ref{conslemma} hold exactly for the semi-discrete continuous-in-time Galerkin system. The fully discrete IFRK4 method is not an invariant-preserving time integrator, and therefore the invariants are monitored through the relative errors
\[
    \frac{|\mathcal I(u_N^n)-\mathcal I(u_N^0)|}{1+|\mathcal I(u_N^0)|},
    \qquad
    \frac{|\mathcal M(u_N^n)-\mathcal M(u_N^0)|}{1+|\mathcal M(u_N^0)|},
    \qquad
    \frac{|\mathcal H(u_N^n)-\mathcal H(u_N^0)|}{1+|\mathcal H(u_N^0)|}.
\]
The reported conservation plots therefore measure the observed fully discrete drift, not an exact fully discrete conservation law.

\subsection*{Example 1: Validation of Spectral Convergence via Solitary Wave Propagation}

This example serves to validate the theoretical framework developed in Sections \ref{sec3}-\ref{sec5} by examining the classical ZK equation \eqref{eqn:fZK} with $\alpha=2$, for which exact analytical solitary wave solutions are known. The solitary wave solution takes the form \cite{ZakharovKuznetsov1974,xu2005local}:
\begin{equation}\label{1solwavezk}
u(x,y,t) = 3c \sech^2\left(\frac{\sqrt{c}}{2}\left[(x - ct)\cos\theta + y\sin\theta\right]\right),
\end{equation}
representing a localized wave packet that propagates with constant velocity $c$ while maintaining its shape due to the precise balance between nonlinear steepening and dispersive spreading.
We consider a solitary wave with amplitude parameter $c=1$ propagating along the x-axis ($\theta=0$) over the time interval $t\in[0,10]$ on the spatial domain $\Omega = [-20\pi, 20\pi]^2$.

The spatial discretization employs the FSG method \eqref{fsg:fZK} with Fourier cutoffs ranging from $N=16$ to $N=256$ in each spatial direction. The time integration utilizes the fourth-order integrating factor Runge-Kutta (IFRK4) method \eqref{eq:rk4_update}, with the time step chosen as $\Delta t = 1/(N\|u_0\|_{L^{\infty}(\Omega)})$ to maintain numerical stability while ensuring that temporal errors remain subordinate to spatial discretization errors.
\begin{table}[h!]
\centering
\begin{tabular}{||c|c|c|c|c||}
\hline
$N$ & $L^2$ Error & Order & $L^\infty$ Error & Order \\
\hline\hline
16  & $2.922\times 10^{1}$ & --- & $1.384\times 10^{0}$ & --- \\
32  & $1.778\times 10^{1}$ & 0.72 & $1.109\times 10^{0}$ & 0.32 \\
64  & $6.248\times 10^{-1}$ & 4.83 & $5.398\times 10^{-2}$ & 4.36 \\
128 & $9.401\times 10^{-4}$ & 9.38 & $5.844\times 10^{-5}$ & 9.85 \\
256 & $3.798\times 10^{-8}$ & 14.60 & $6.017\times 10^{-9}$ & 13.25 \\
\hline
\end{tabular}
\caption{Spectral convergence analysis for the fZK equation \eqref{eqn:fZK} for $\alpha=2$ with exact solitary wave solution at $T=10$. The exponential convergence with decay rate $\sigma \approx 0.09$ demonstrates the characteristic spectral accuracy of the Fourier method, with errors decreasing by over nine orders of magnitude from $N=16$ to $N=256$.}
\label{tab:convergence}
\end{table}

The numerical results presented in Figure \ref{fig:solitary_wave} and Table \ref{tab:convergence} validate the spatial accuracy of the method in the classical case $\alpha=2$. Since the solitary wave is a whole-space solution, the computation on the large periodic box $\Omega=[-20\pi,20\pi]^2$ is a large-period approximation. For sufficiently large \(N\), the numerical solution accurately captures the wave profile at \(T=10\), and the errors exhibit the rapid decay expected from Fourier approximation of smooth localized data. The cross-sectional view in Figure \ref{fig:solitary_wave}(a) demonstrates the exact preservation of the solitary wave structure during propagation, with the numerical solution at $T=10$ showing that the approximate solution matches the exact solution. The two-dimensional visualizations in Figures \ref{fig:solitary_wave}(b) and \ref{fig:solitary_wave}(c) further confirm the method's accuracy in capturing the spatial structure of the solitary wave. Figure \ref{fig:solitary_wave}(d) verifies the spectral convergence of the scheme. The error evolution in Table \ref{tab:convergence} exhibits the  exponential convergence of spectral methods, with the $L^2$ error decreasing from $\mathcal{O}(10^{1})$ at $N=16$ to $\mathcal{O}(10^{-8})$ at $N=256$. 

The figure \ref{fig:cons} demonstrates that for the classical case $(\alpha=2)$ where exact solitary wave solutions exist, the FSG method \eqref{fsg:fZK} preserves the fundamental invariants \eqref{convquant} with exceptional precision over extended time evolution $(T=1000)$. The relative conservation errors at final time remain within $\mathcal{O}(10^{-14})$ for mass, $\mathcal{O}(10^{-08})$ for momentum, and $\mathcal{O}(10^{-08})$ for Hamiltonian energy, validating both the theoretical conservation properties and numerical implementation accuracy.

\begin{figure}[h!]
\centering
\includegraphics[width=1\textwidth]{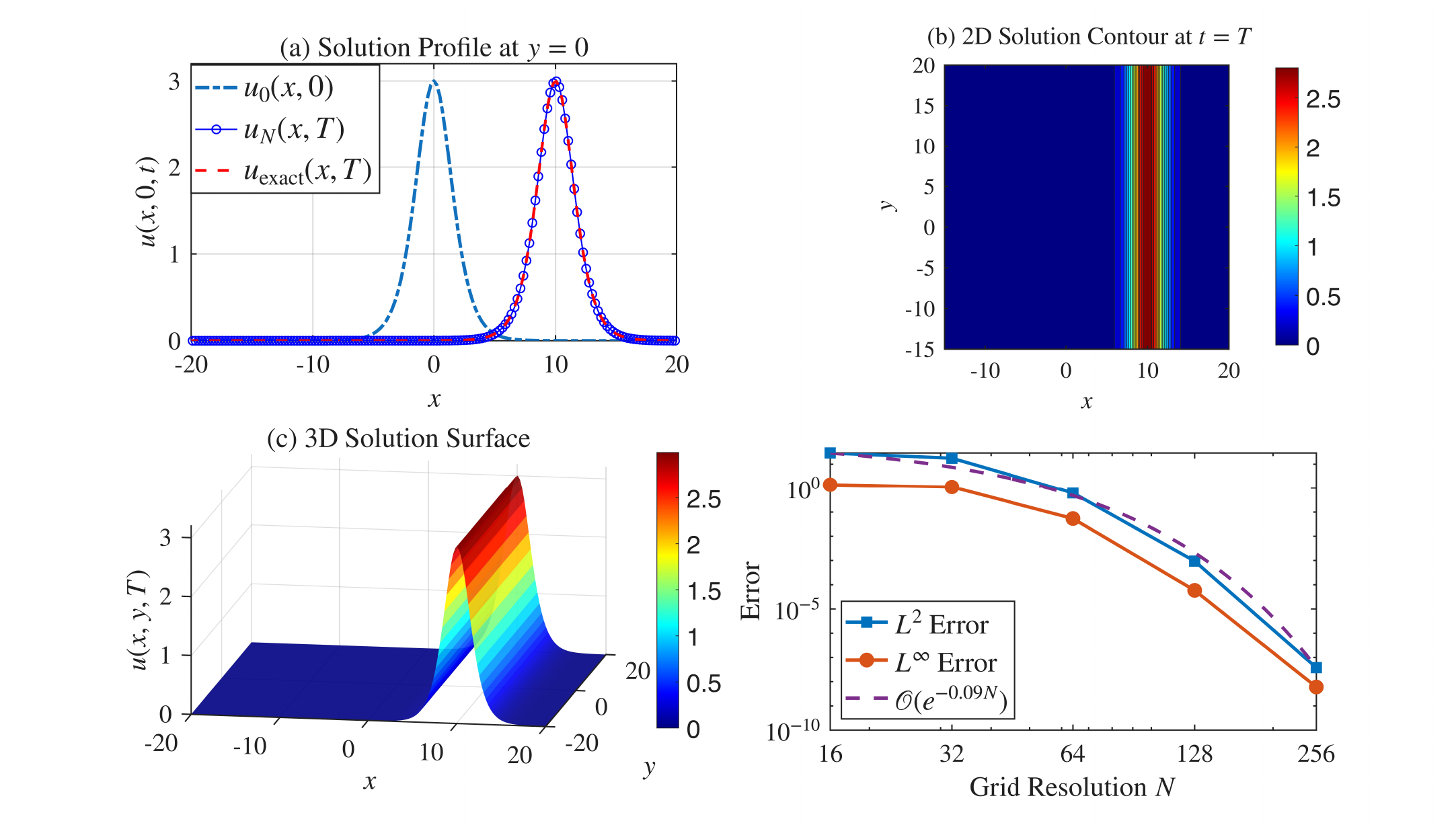}
\caption{Comprehensive validation of the FSG method \eqref{fsg:fZK} for the fZK equation ($\alpha=2$) \eqref{eqn:fZK}. Panel (a) shows the cross-sectional wave profile at $y=0$. Panel (b) displays the 2D contour plot. Panel (c) provides a 3D surface visualization of the solitary wave profile. Panel (d) presents the spectral convergence analysis. Parameters: $c=1$, $\theta=0$, $T=10$, domain $\Omega = [-20\pi,20\pi]^2$ and time step $\Delta t = 1/(N\|u\|_{L^{\infty}(\Omega)})$.}
\label{fig:solitary_wave}
\end{figure}

\begin{figure}[h!]
\centering
\includegraphics[width=1\textwidth]{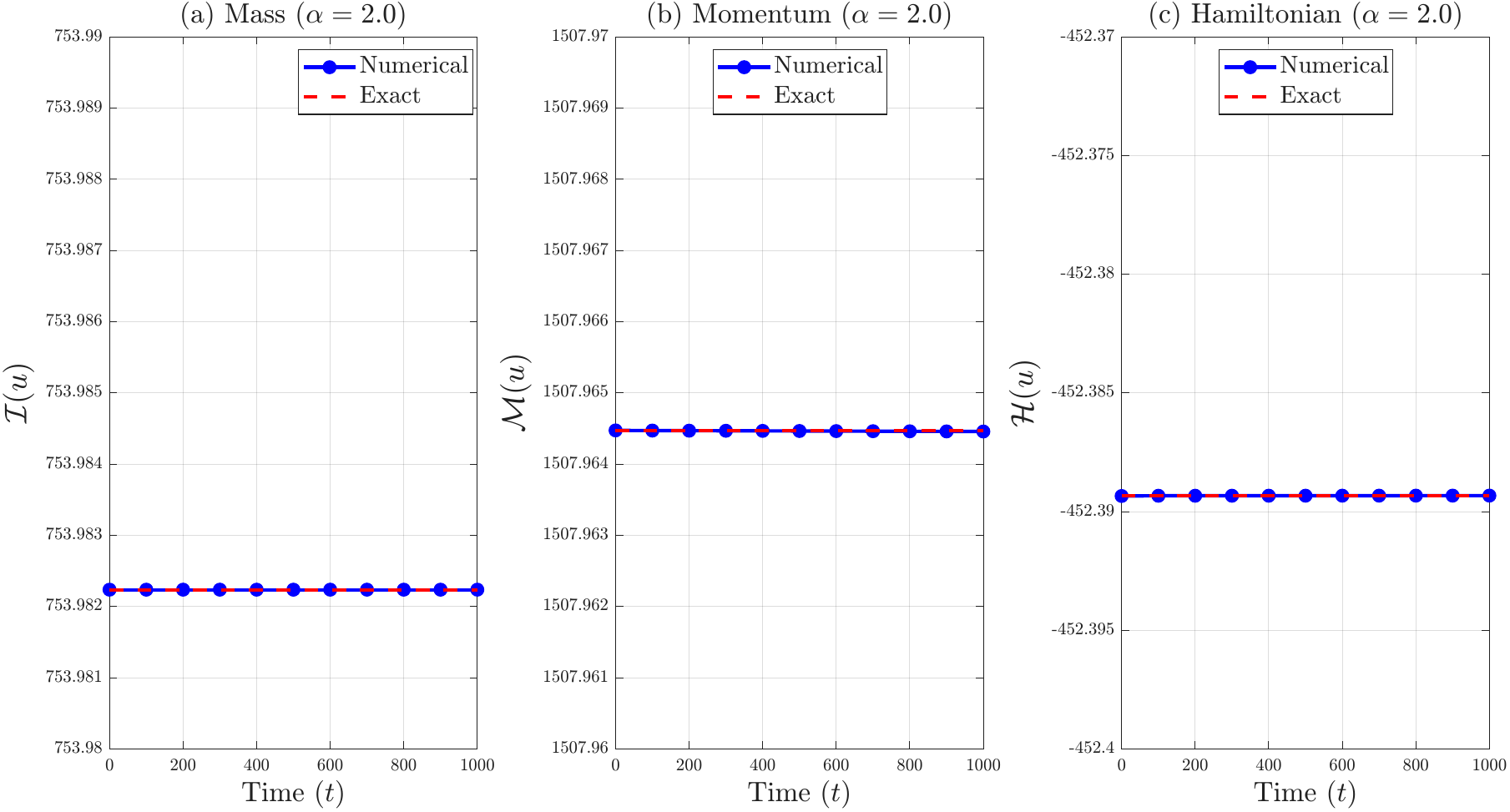}
\caption{Observed long-time relative conservation errors for the fully discrete IFRK4--FSG approximation \eqref{fdscheme} of the fZK equation \eqref{eqn:fZK} with $\alpha=2$. The semi-discrete Galerkin system conserves $\mathcal I$, $\mathcal M$, and $\mathcal H$ exactly, but the IFRK4 time discretization is not exactly invariant-preserving. The plotted quantities therefore show the fully discrete drift of the invariants over $t\in[0,1000]$. The numerical solution (blue circles) maintains near-machine-precision agreement with the exact conserved quantities (red dashed lines), with relative errors of $\mathcal{O}(10^{-14})$, $\mathcal{O}(10^{-08})$, and $\mathcal{O}(10^{-08})$ respectively, confirming the small fully discrete drift of the invariants for the chosen resolution and time step for the classical ZK equation \eqref{zkeqn}.}
\label{fig:cons}
\end{figure}

\subsection*{Example 2. (Soliton Interactions)}
The interaction of solitary waves provides a rigorous test of numerical methods for nonlinear wave equations, particularly for assessing a scheme's ability to handle nonlinear interactions while preserving structural properties. Following the approach in \cite{xu2005local,SunLiuWei2016}, we consider the two-soliton initial condition:
\begin{equation}
u_0(x,y) = \sum_{j=1}^2 3c_j \sech^2\left(\frac{\sqrt{c_j}}{2}\left[(x-x_j)\cos\theta_j + (y-y_j)\sin\theta_j\right]\right)
\end{equation}
with parameters $c_1=0.5$, $c_2=0.2$, $\theta_1=0$, $\theta_2=0$, $x_1=-15$, $x_2=0$, and $y_1=y_2=0$. The computational domain is $\Omega = [-20\pi, 20\pi]^2$ with $N=512$ in each direction and a time step $\Delta t = 1/(N\|u_0\|_{L^{\infty}(\Omega)})$ over the time interval $t \in [0,60]$.

\begin{figure}[h!]
\centering
\includegraphics[width=1\textwidth]{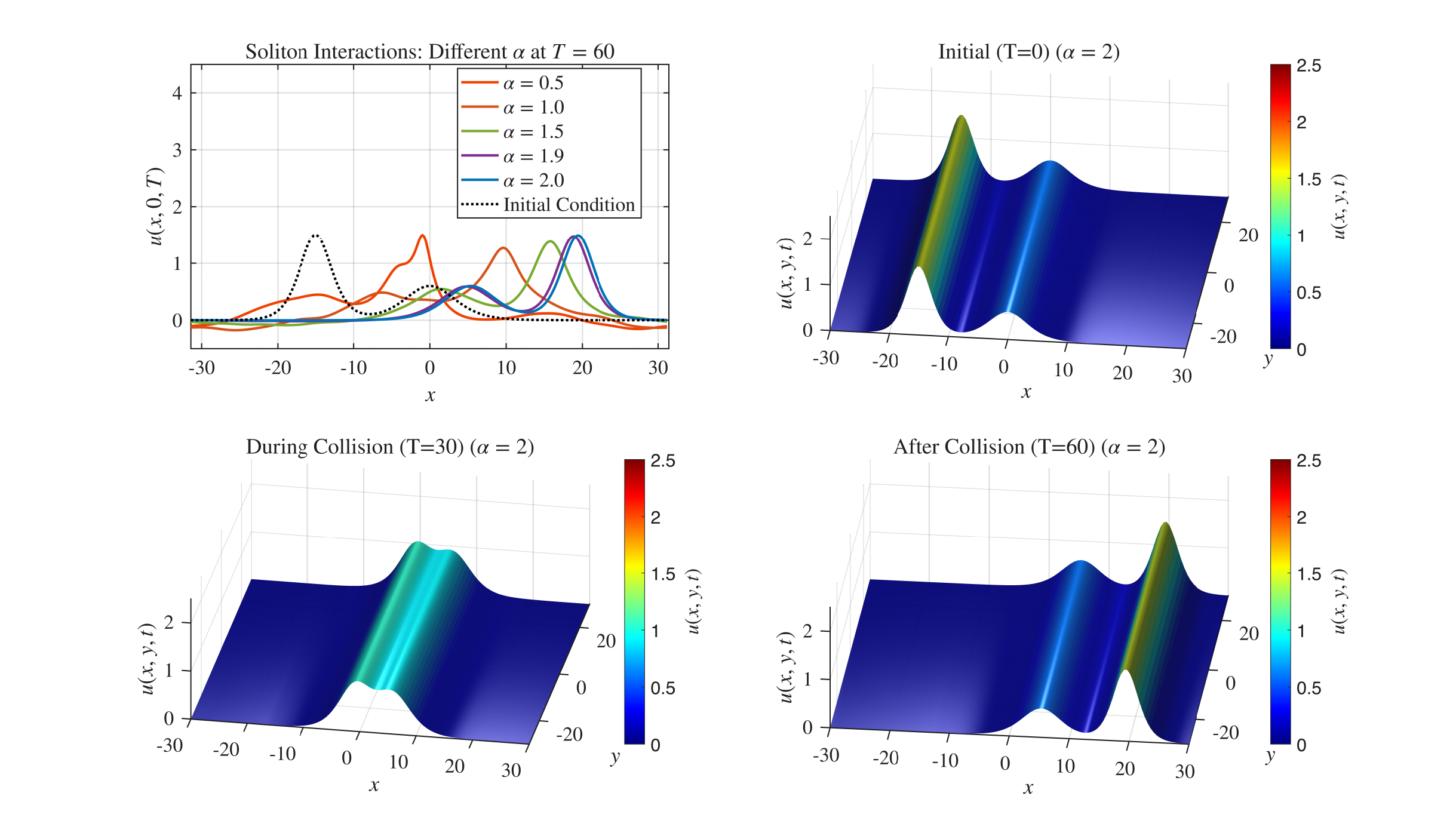}
\caption{Soliton interaction dynamics for the fZK equation ($\alpha=2$). Panel (a) shows the wave profile at $y=0$ for different fractional orders $\alpha\in\{0.5, 1, 1.5, 1.9, 2.0\}$ at final time $T=60$. Panel (b) displays the 3D surface visualization of the initial two-soliton configuration ($T=0$). Panel (c) provides a 3D surface visualization during collision ($T=30$). Panel (d) presents the 3D surface visualization after collision ($T=60$). Parameters: $c_1=0.5$, $c_2=0.2$, $\theta_1=0$, $\theta_2=0$, domain $\Omega = [-20\pi,20\pi]^2$, spatial resolution $N=512$, and time step $\Delta t = 1/(N\|u_0\|_{L^{\infty}(\Omega)})$.}
\label{fig:soliton_interaction}
\end{figure}

Figure \ref{fig:soliton_interaction} presents a comprehensive analysis of soliton interactions across different fractional orders. Panel (a) reveals the significant impact of fractional order $\alpha$ on soliton dynamics: as $\alpha \in \{0.5, 1, 1.5, 1.9 ,2 \}$, the enhanced dispersion leads to faster propagation speeds and more pronounced wave spreading. The larger-amplitude soliton ($3c_1=1.5$) consistently overtakes the smaller one ($3c_2=0.6$) across all fractional orders, but the interaction characteristics vary substantially with $\alpha$. For lower fractional orders ($\alpha=1.2$), the reduced dispersion results in more localized interaction and slower propagation, while higher orders ($\alpha=2.0$) exhibit stronger dispersive effects that accelerate the solitons and modify the interaction profile. Panels (b)-(d) detail the complete interaction process for the classical ZK equation ($\alpha=2.0$). Panel (b) shows the initial configuration at $T=0$ with two well-separated solitons propagating along the directions $\theta_1=0$ and $\theta_2=0$. Panel (c) captures the nonlinear interaction phase at $T=30$, where the larger soliton overtakes the smaller one, creating a complex merged wave structure. Panel (d) demonstrates the post-collision state at $T=60$, showing both solitons emerging with their original amplitudes and shapes preserved. The post-interaction profiles suggest that the method resolves the localized wave interaction without significant numerical distortion over the simulated
time interval.

The numerical simulation confirms several key physical and mathematical properties and is consistent with the theoretical prediction that soliton velocity scales linearly with amplitude in the ZK equation \cite{ZakharovKuznetsov1974}. This example validates the method's capability for simulating complex nonlinear wave phenomena and provides confidence in its application to more challenging problems in plasma physics and nonlinear wave theory.

The numerical experiments comprehensively validate the theoretical analysis and demonstrate the effectiveness of the IFRK4-FSG method \eqref{fdscheme} for the fZK equation \eqref{eqn:fZK}. The scheme accurately captures nonlinear wave phenomena, preserves important structural properties, and provides efficient computation across various fractional orders.

\section{Conclusion}
\label{sec7}
This work introduced a Fourier spectral Galerkin method for the periodic fZK
equation \eqref{eqn:fZK} in the full fractional range \(0<\alpha\leq2\).
This includes the classical ZK equation \(\alpha=2\), the
higher-dimensional Benjamin--Ono--ZK endpoint \(\alpha=1\), and weaker
fractional-dispersion regimes \(0<\alpha<1\). The semi-discrete scheme
preserves discrete analogues of mass, momentum, and Hamiltonian energy. Using
periodic Kato--Ponce and commutator estimates, we established local-in-time
Sobolev bounds and proved strong convergence of the Galerkin approximations
to the unique local strong solution in the high-regularity class
\(H^s_{\mathrm{per}}(\Omega)\), \(s>2+\alpha\). We also introduced a modified
projection adapted to the fractional transport-dispersive operator and proved
optimal spatial convergence for \(r>2+\alpha\). A refined adjoint multiplier
estimate is the key technical ingredient that allows the optimal projection
argument to remain valid also for \(0<\alpha<1\). Finally, an
integrating-factor RK4 time discretization was analyzed, yielding fourth-order
temporal convergence under a high-regularity nonlinear stability assumption.

\bibliographystyle{abbrv}
\bibliography{main}

\end{document}